\def\be{\begin{equation}}
\def\ee{\end{equation}}
\def\ba{\begin{array}}
\def\ea{\end{array}}
\def\bq{\begin{eqnarray}}
\def\eq{\end{eqnarray}}
\def\beq{\begin{eqnarray*}}
\def\eeq{\end{eqnarray*}}
\def\bi{\begin{itemize}}
\def\ei{\end{itemize}}
\def\bc{\begin{center}}
\def\ec{\end{center}}
\def\bdf{\begin{definition}}
\def\edf{\end{definition}}
\def\bal{\begin{aligned}}
\def\eal{\end{aligned}}
\def\bth{\begin{theorem}}
\def\eth{\end{theorem}}
\def\brm{\begin{remark}}
\def\erm{\end{remark}}
\def\la{\lambda}
\def\o{\omega}
\def\p{\partial}
\def\deg{\mbox{deg}}
\def\Res{\mbox{Res}}
\def\ov{\overline}
\def\Res{\operatorname{Res}}
\newcommand{\R}{\mathbb{R}}
\newcommand{\C}{\mathbb{C}}
\DeclareMathOperator{\SC}{\mathbb{S}}
\newcommand{\N}{\mathbb{N}}
\newcommand{\PP}{\mathbb{P}}
\title{Global phase portraits of quadratic polynomial differential systems with a semi--elemental triple node} 
\author{Joan C. Art\'es}
\address{Departament de Matem\`{a}tiques, Universitat Aut\`{o}noma de
Barcelona, 08193 Bellaterra, Barcelona, Spain\\E--mail: artes@mat.uab.cat}
\author{Alex C. Rezende$^1$ and Regilene D. S. Oliveira$^2$}
\address{Departamento de Matem\'{a}tica,
Universidade de S\~{a}o Paulo, \\13566--590, S\~{a}o Carlos,
S\~{a}o Paulo, Brazil, \\E--mail:
$^1$arezende@icmc.usp.br,
$^2$regilene@icmc.usp.br}
\date{}
\abstract{
Planar quadratic differential systems occur in many areas of applied mathematics. Although more than one thousand papers have been
written on these systems, a complete understanding of this family is still missing. Classical problems, and in particular, Hilbert's 16th problem \cite{Hilbert:1900,Hilbert:1902}, are still open for this family. In this article we make a global study of the family $QT\overline{N}$ of all real quadratic polynomial differential systems which have a semi--elemental triple node (triple node with exactly one zero eigenvalue). This family modulo the action of the affine group and time homotheties is three--dimensional and we give its bifurcation diagram with respect to a normal form, in the three-dimensional real space of the parameters of this form. This bifurcation diagram yields 28 phase portraits for systems in $QT\overline{N}$ counting phase portraits with and without limit cycles. Algebraic invariants are used to construct the bifurcation set. The phase portraits are represented on the Poincar\'{e} disk. The bifurcation set is not only algebraic due to the presence of a surface found numerically. All points in this surface correspond to connections of separatrices.}
\begin{document}

\maketitle\clearpage

\section{Introduction, brief review of the literature and statement of results}\label{sec:int}

\indent In this paper we call \textit{quadratic differential systems} or simply \textit{quadratic systems}, differential systems of the form
    \be \ba{lcccl}
            \dot{x}&=& p(x,y), \\
            \dot{y}&=& q(x,y), \\
    \ea \label{eq:qs} \ee
where $p$ and $q$ are polynomials over $\R$ in $x$ and $y$ such that the max(\deg$(p)$,\deg$(q))=2$. To such a system one can always associate the quadratic vector field
    \be
        X=p\frac{\p}{\p x}+q\frac{\p}{\p y} \label{eq:qvf},
    \ee
as well as the differential equation
    \be
        qdx-pdy=0.
        \label{eq:de}
    \ee
The class of all quadratic differential systems (or quadratic vector fields) will be denoted by $QS$.

We can also write system \eqref{eq:qs} as
    \be \ba{lcccl}
        \dot{x} & = p_0+p_{1}(x,y)+p_{2}(x,y)=p(x,y), \\
        \dot{y} & = q_0+q_{1}(x,y)+q_{2}(x,y)=q(x,y), \\
    \ea \label{2l1} \ee
where $p_i$ and $q_i$ are homogeneous polynomials of degree $i$ in $(x,y)$ with real coefficients with $p_{2}^2+q_{2}^2 \neq 0$.

The complete characterization of the phase portraits for real planar quadratic vector fields is not known and attempting to topologically classify these systems, which occur rather often in applications, is quite a complex task. This family of systems depends on twelve parameters, but due to the action of the group $G$ of real affine transformations and time homotheties, the class ultimately depends on five parameters, but this is still a large number.

The goal of this article is to study the class $QT\overline{N}$ of all quadratic systems possessing a semi--elemental triple node. By a semi--elemental point we understand a singular point with zero determinant of its Jacobian, but only one eigenvalue zero. These points are known in classical literature as semi--elementary, but we use the term semi--elemental introduced in \cite{Artes-Llibre-Schlomiuk-Vulpe:2012} as part of more consistent definitions associated to singular points, their multiplicities and Jacobians.

The condition of having a semi--elemental triple node of all the systems in $QT\overline{N}$ implies that these systems may have another finite point or not.

For a general framework of study of the class of all quadratic differential systems we refer to the article of Roussarie and Schlomiuk \cite{Roussarie-Schlomiuk:2002}.

In this study we follow the pattern set out in \cite{Artes-Llibre-Schlomiuk:2006}. As much as possible we shall try to avoid repeating technical sections which are the same for both papers, referring to the paper mentioned just above, for more complete information.

In this article we give a partition of the class $QT\overline{N}$ into 63 parts: 17 three--dimensional ones, 29 two--dimensional ones, 15 one--dimensional ones and 2 points. This partition is obtained by considering all the bifurcation surfaces of singularities and one related to connections of separatrices, modulo ``islands'' (see Sec. \ref{sec:islands}).

A \textit{graphic} as defined in \cite{Dumortier-Roussarie-Rousseau:1994} is formed by a finite sequence of points $r_1,r_2,\ldots,r_n$ (with possible repetitions) and non--trivial connecting orbits $\gamma_i$ for $i=1,\ldots,n$ such that  $\gamma_i$ has $r_i$ as $\alpha$--limit set and $r_{i+1}$ as $\omega$--limit set for $i<n$ and $\gamma_n$ has $r_n$ as  $\alpha$--limit set and $r_{1}$ as $\omega$--limit set. Also normal orientations $n_j$ of the non--trivial orbits must be coherent  in the sense that if $\gamma_{j-1}$ has left--hand orientation then so does $\gamma_j$. A \textit{polycycle} is a graphic which has  a Poincar\'{e} return map. For more details, see \cite{Dumortier-Roussarie-Rousseau:1994}.

\begin{theorem} \label{th:1.1} There exist 28 distinct phase portraits for the quadratic vector fields having a semi--elemental triple node. All these phase portraits are shown in Fig. \ref{fig:phase1}. Moreover, the following statements hold:
\begin{enumerate}[(a)]
\item There exist three phase portraits with limit cycles, and they are in the regions $V_{6}$, $V_{15}$ and $5S_{5}$;
\item There exist four phase portraits with graphics, and they are in the regions $5S_{4}$, $7S_{1}$, $1.3L_{2}$ and $5.7L_{1}$.
\end{enumerate}
\end{theorem}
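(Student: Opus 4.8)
The plan is to follow the now-standard program for the topological classification of a parametrized family of planar polynomial vector fields: reduce to a normal form, cut the parameter space by all the relevant bifurcation surfaces, and then read off the phase portrait on each resulting stratum. First I would use the action of the affine group together with time rescaling to bring every system in $QT\overline{N}$ to a three-parameter normal form in which the semi-elemental triple node sits at a fixed location (say the origin) with prescribed eigendirections; since the family is stated to be three-dimensional, such a form depends on exactly three essential parameters, which I denote collectively by a point of $\R^3$.

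Second, I would construct the bifurcation set inside this $\R^3$ of parameters. Following \cite{Artes-Llibre-Schlomiuk:2006}, the algebraic part of this set is detected by the affine invariants and comitants of the system: one obtains surfaces along which two finite singularities collide or change character, surfaces governing the appearance and type of singularities at infinity (read off after Poincar\'e compactification), and the surfaces on which a weak focus or a Hopf bifurcation occurs. Each such condition is an algebraic equation in the three parameters, so the corresponding surfaces are algebraic. In addition, as the abstract already warns, one bifurcation surface is not algebraic and must be found numerically; this is the locus of systems possessing a connection of separatrices. I would locate it by continuation: on each generic line transverse to the expected surface, integrate the relevant separatrices and detect the parameter value at which the $\alpha$- and $\omega$-limit sets coincide, then interpolate the surface through these points.

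Third, with all the surfaces in hand the parameter space is partitioned, and I would verify the announced count of $17+29+15+2=63$ strata (three-, two-, one-dimensional and isolated points) by a careful bookkeeping of the pieces cut out by the surfaces, modulo the possibility of ``islands.'' On each open three-dimensional part the vector field is structurally stable within the family, so the phase portrait is constant up to topological equivalence; I would determine it by listing the finite singularities with their local phase portraits, doing the same at infinity on the Poincar\'e disk, and then assembling the global separatrix skeleton, invoking the Markus--Neumann--Peixoto classification to conclude that the skeleton determines the portrait. Comparing the portraits across all strata and discarding repetitions yields the $28$ topologically distinct phase portraits; assertions (a) and (b) then amount to recording in which strata limit cycles and graphics actually occur.

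The main obstacle I expect is twofold. The genuinely hard analytic point is controlling the limit cycles: after locating the Hopf surface one must prove that exactly one limit cycle is born and, crucially, that no further limit cycles arise elsewhere in $V_6$, $V_{15}$ and $5S_5$, while none exist in any other region; establishing this uniqueness typically requires a theory-of-rotated-vector-fields or Dulac-function argument rather than a local computation. The second difficulty is the non-algebraic connection surface: since it is obtained numerically, one must argue---by monotonicity of the relevant separatrix under a one-parameter rotation of the field---that the connection occurs on a single well-defined surface, and that this surface is precisely what bounds the regions $5S_4$, $7S_1$, $1.3L_2$ and $5.7L_1$ carrying the graphics of part (b).
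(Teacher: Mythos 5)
Your overall program coincides with the paper's: reduce to the three--parameter normal form \eqref{eqtn} with $(m,n,k)\in\R^3$, build the algebraic part of the bifurcation set from the invariants $\mu$, $\eta$, $\mathcal{T}_4$ and $W_4$ (surfaces $({\cal S}_{1})$, $({\cal S}_{5})$, $({\cal S}_{3})$, $({\cal S}_{6})$), foliate by slices $k=\mathrm{const}$, obtain the $63$ parts modulo ``islands,'' and identify the $28$ portraits. The genuinely different point is how the non--algebraic connection surface $({\cal S}_{7})$ is certified. You propose numerical continuation of separatrices backed by a monotonicity/rotated--vector--fields argument; but the family \eqref{eqtn} is not a semi--complete family of rotated vector fields in the relevant parameter, so that monotonicity is not available. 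The paper instead proves Proposition \ref{prop1}: at two explicit parameter points on the slice $\eta=0$ it exhibits explicit hyperbolas without contact isolating the focus, with the flow crossing outwards in one case and inwards in the other, so the number of limit cycles (with multiplicity) has different parity at the two points; hence somewhere in between a separatrix connection (the arc $5.7L_{1}$, splitting $5s_4$ into $5S_4$ and $5S_5$) must occur. This parity argument is the key analytic lemma and replaces both your monotonicity step and your proposed uniqueness proof: the paper never proves that exactly one limit cycle exists (it carefully says ``an odd number\ldots possibly one''), and statement (a) only requires existence, so the hard uniqueness question you flag is deliberately sidestepped. Finally, rather than invoking the Markus--Neumann--Peixoto theorem on each stratum, the paper separates the $28$ classes by the explicit integer--valued invariant ${\cal I}=(I_1,\ldots,I_5)$ of Theorem \ref{t12} (Tables 1 and 2), which gives a checkable certificate that the portraits listed are pairwise inequivalent.
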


\onecolumn
\begin{figure}
\psfrag{V1}{$V_{1}$}   \psfrag{V3}{$V_{3}$}   \psfrag{V4}{$V_{4}$}
\psfrag{V6}{$V_{6}$}   \psfrag{V8}{$V_{8}$}   \psfrag{V10}{$V_{10}$}
\psfrag{XX}{$V_{11}$} \psfrag{V12}{$V_{12}$} \psfrag{V15}{$V_{15}$}
\psfrag{1S1}{$1S_{1}$} \psfrag{1S2}{$1S_{2}$} \psfrag{1S3}{$1S_{3}$}
\psfrag{1S4}{$1S_{4}$} \psfrag{5S1}{$5S_{1}$} \psfrag{5S2}{$5S_{2}$}
\psfrag{5S4}{$5S_{4}$} \psfrag{5S5}{$5S_{5}$} \psfrag{5S7}{$5S_{7}$}
\psfrag{5S8}{$5S_{8}$} \psfrag{5S9}{$5S_{9}$} \psfrag{5S10}{$5S_{10}$}
\psfrag{7S1}{$7S_{1}$} \psfrag{1.3L1}{$1.3L_{1}$} \psfrag{1.3L2}{$1.3L_{2}$}
\psfrag{1.5L1}{$1.5L_{1}$} \psfrag{1.5L2}{$1.5L_{2}$} \psfrag{5.7L1}{$5.7L_{1}$}
\psfrag{P1}{$P_{1}$}
\centerline{\psfig{figure=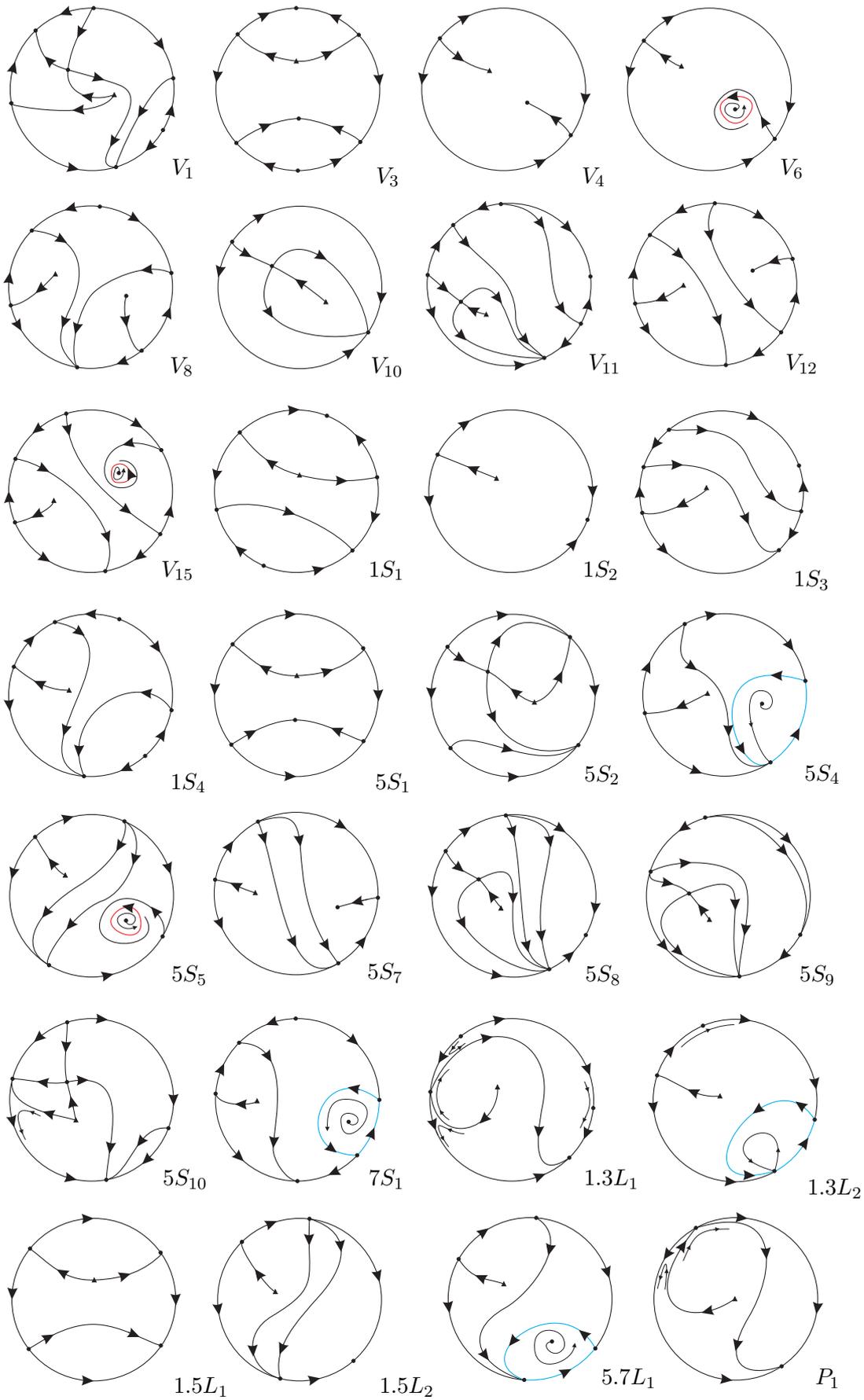,width=14cm}} \centerline {}
\caption{\small \label{fig:phase1} Phase portraits for quadratic vector fields with a semi--elemental triple node.}
\end{figure}
\twocolumn

From the 28 different phase portraits, 9 occur in 3--dimensional parts, 13 in 2--dimensional parts, 5 in 1--dimensional parts and 1 occur in a single 0--dimensional part.

In Fig. \ref{fig:phase1} we have denoted with a little disk the elemental singular points and with a little triangle the semi--elemental triple node. We have plotted with wide curves the separatrices and we have added some thinner orbits to avoid confusion in some required cases.

\begin{remark} \rm The phase portraits are labeled according to the parts of the bifurcation diagram where they occur. These labels could be different for two topologically equivalent phase portraits occurring in distinct parts. Some of the phase portraits in 3--dimensional parts also occur in some 2--dimensional parts bordering these 3--dimensional parts. An example occurs when a node turns into a focus. An analogous situation happens for phase portraits in 2--dimensional (respectively, 1--dimensional) parts, coinciding with a phase portrait on 1--dimensional (respectively, 0--dimensional) part situated on the border of it.
\end{remark}

The work is organized as follows. In Sec. \ref{sec:qvftn} we describe the normal form for the family of systems having a semi--elemental triple node.

For the study of real planar polynomial vector fields two compactifications are used. In Sec. \ref{sec:poincare} we describe very briefly the Poincar\'{e} compactification on the 2--dimensional sphere.

In Sec. \ref{sec:basicwf} we list some very basic properties of general quadratic systems needed in this study.

In Sec. \ref{sec:internum} we mention some algebraic and geometric concepts that were introduced in \cite{Schlomiuk-Pal:2001,Llibre-Schlomiuk:2004} involving intersection numbers, zero--cycles, divisors, and T--comitants and invariants for quadratic systems as used by the Sibirskii school. We refer the reader directly to \cite{Artes-Llibre-Schlomiuk:2006} where these concepts are widely explained.

In Sec. \ref{sec:bifur}, using algebraic invariants and T--comitants, we construct the bifurcation surfaces for the class $QT\overline{N}$.

In Sec. \ref{sec:8} we introduce a global invariant denoted by ${\cal I}$, which classifies completely, up to topological equivalence, the phase portraits we have obtained for the systems in the class $QT\overline{N}$. Theorem \ref{t12} shows clearly that they are uniquely determined (up to topological equivalence) by the values of the invariant ${\cal I}$.

\section{Quadratic vector fields with a semi--elemental triple node}\label{sec:qvftn}

\indent A singular point $r$ of a planar vector field $X$ in $\R^2$ is \textit{semi--elemental} if the determinant of the matrix of its linear part, $DX(r)$, is zero, but its trace is different from zero.

The following result characterizes the local phase portrait at a semi--elemental singular point.

\begin{proposition} \label{th2.19} \cite{Andronov-Leontovich-Gordon-Maier:1973,Dumortier-Llibre-Artes:2006}
Let $r=(0,0)$ be an isolated singular point of the vector field $X$ given by
    \begin{equation}
        \begin{array}{ccl}
            \dot{x} & = & A(x,y), \\
            \dot{y} & = & \lambda y + B(x,y), \\
        \end{array}
        \label{eqth2.19}
    \end{equation}
where $A$ and $B$ are analytic in a neighborhood of the origin starting with at least degree 2 in the variables $x$ and $y$. Let $y=f(x)$ be the solution of the equation $\lambda y + B(x,y) = 0$ in a neighborhood of the point $r=(0,0)$, and suppose that the function $g(x) = A(x,f(x))$ has the expression $g(x) = a x^\alpha + o(x^\alpha)$, where $\alpha \geq 2$ and $a \neq 0$. So, when $\alpha$ is odd, then $r=(0,0)$ is either an unstable multiple node, or a multiple saddle, depending if $a>0$, or $a<0$, respectively. In the case of the multiple saddle, the separatrices are tangent to the $x$--axis. If $\alpha$ is even, the $r=(0,0)$ is a multiple saddle--node, i.e. the singular point is formed by the union of two hyperbolic sectors with one parabolic sector. The stable separatrix is tangent to the positive (respectively, negative) $x$--axis at $r=(0,0)$ according to $a<0$ (respectively, $a>0$). The two unstable separatrices are tangent to the $y$--axis at $r=(0,0)$.
\end{proposition}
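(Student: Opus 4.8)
The plan is to reduce the two--dimensional local problem to a one--dimensional one on an invariant curve and then glue it to the behaviour in the hyperbolic direction. The linear part of \eqref{eqth2.19} has eigenvalues $0$ and $\lambda\neq0$, so the origin is semi--hyperbolic and there is a one--dimensional centre manifold $W^c=\{y=h(x)\}$ tangent to the $x$--axis, the $y$--axis being the hyperbolic direction. I would first treat the representative case $\lambda>0$ (so the $y$--direction is repelling), recovering $\lambda<0$ at the end by reversing time, which merely interchanges the words \emph{stable} and \emph{unstable} while leaving the parity dichotomy untouched. On $W^c$ the dynamics reduces to $\dot x=A(x,h(x))$, and the whole argument rests on identifying the leading term of this reduced field.

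The first---and I expect the hardest---step is to show that the reduced field is governed by the \emph{analytic} nullcline $y=f(x)$ rather than by the only finitely smooth centre manifold. Since $\partial_y(\lambda y+B)=\lambda\neq0$ at the origin, the implicit function theorem produces an analytic $f$ with $f(0)=f'(0)=0$ solving $\lambda y+B(x,y)=0$, and by hypothesis $g(x)=A(x,f(x))=a x^\alpha+o(x^\alpha)$. To transfer this to $h$ I would write the invariance identity $h'(x)A(x,h(x))=\lambda h(x)+B(x,h(x))$, subtract the nullcline relation $\lambda f+B(x,f)=0$, and bootstrap: using $A=O(x^2)$, $h'=O(x)$ and $\partial_y B=O(x)$ one gets $h-f=O(x^3)$, and each reinjection of this estimate into the invariance identity raises the order by two, eventually forcing $h-f=O(x^{\alpha+1})$. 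Consequently $A(x,h(x))=a x^\alpha+o(x^\alpha)$ as well, so $f$ and $h$ yield the same leading behaviour.

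With the reduced field pinned down, the local portrait follows from a parity--and--sign analysis of $\dot x=a x^\alpha(1+o(1))$ combined with the repulsion $e^{\lambda t}$ in $y$. If $\alpha$ is odd then $g$ has the sign of $a\,x$: for $a>0$ the curve $W^c$ repels on both sides and, together with the repelling transverse direction, every orbit leaves the origin, giving an unstable node; for $a<0$ the centre direction attracts while the transverse one repels, giving a saddle whose separatrices lie on $W^c$ and are hence tangent to the $x$--axis. If $\alpha$ is even then $g$ keeps a fixed sign on each side, so $W^c$ is attracting on one half--axis and repelling on the other, which is the semi--stable behaviour characteristic of a saddle--node. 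Reading the signs, the attracting half is the positive $x$--axis when $a<0$ and the negative $x$--axis when $a>0$, which fixes the tangency of the unique stable separatrix, while the two unstable separatrices are the branches of the strong unstable manifold along the $y$--axis.

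It remains to make the saddle--node sectorial picture rigorous, namely that the origin splits into exactly two hyperbolic sectors and one parabolic sector with the stated tangencies. The cleanest route is a directional blow--up $(x,y)\mapsto(x,y/x)$ (or a polar blow--up): the degenerate point is desingularized into elementary points on the exceptional divisor whose local sectors reassemble into the claimed portrait, and the characteristic directions reproduce the $x$-- and $y$--axis tangencies found above; alternatively one may invoke the classical theory of normal sectors and characteristic directions. This sectorial analysis requires care, but the decisive analytic difficulty remains the order--matching of the previous step, on which the identification of $\alpha$ and $a$ for the true reduced field depends.
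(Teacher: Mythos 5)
The paper does not actually prove this proposition: it is quoted, with citations, as the classical semi--hyperbolic singular points theorem (Theorem 2.19 of the book of Dumortier, Llibre and Art\'es, going back to Andronov et al.), so there is no internal proof to compare against. Your sketch reproduces the standard proof from those sources: centre--manifold reduction, identification of the leading term of the reduced field through the analytic nullcline $y=f(x)$, and a normal--sector or blow--up argument to assemble the sectors. The bootstrap you propose is the right key step and is correctly set up: subtracting the nullcline relation from the invariance identity gives $(\lambda+O(x))(h-f)=h'(x)A(x,h(x))$, each reinjection gains two orders, and after finitely many steps $A(x,h(x))=a x^{\alpha}+o(x^{\alpha})$; one only has to fix in advance a $C^{k}$ centre manifold with $k>\alpha+1$ so that the Taylor estimates on $h$ are legitimate.

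The one genuine slip is the reduction of $\lambda<0$ to $\lambda>0$ by time reversal. Reversing time replaces $A$ by $-A$, hence $a$ by $-a$ (while $f$ is unchanged, the nullcline equation being merely rescaled), so it does \emph{not} ``merely interchange the words stable and unstable while leaving the parity dichotomy untouched.'' The case $\lambda<0$, $\alpha$ odd, $a>0$ maps under reversal to $\lambda>0$, $\alpha$ odd, $a<0$, i.e.\ to a saddle, and a saddle reversed in time is still a saddle --- not an unstable node. Indeed the proposition as written fails for $\lambda<0$ (take $\dot{x}=x^{3}$, $\dot{y}=-y+x^{2}$: here $f(x)=x^{2}$, $g(x)=x^{3}$, so $a=1>0$ and $\alpha=3$, yet the origin is a topological saddle); the statement tacitly assumes $\lambda>0$, which is harmless for this paper since the normal form \eqref{eqsemielemental} has $\lambda=1$. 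You should therefore either add the hypothesis $\lambda>0$ and drop the time--reversal step, or carry the reversal through honestly and record how the sign of $a$ and the tangency statements transform. Granting that correction and the deferred sectorial analysis, the rest of your outline is sound.
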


In the particular case where $A$ and $B$ are real quadratic polynomials in the variables $x$ and $y$, a quadratic system with a semi--elemental singular point at the origin can always be written into the form
    \begin{equation}
        \begin{array}{ccl}
            \dot{x} & = & g x^2 + 2h xy + k y^2, \\
            \dot{y} & = & y + \ell x^2 + 2m xy + n y^2. \\
        \end{array}
        \label{eqsemielemental}
    \end{equation}

By Proposition \ref{th2.19}, if $g \neq 0$, then we have a double saddle--node $\overline{sn}_{(2)}$, using the notation introduced in \cite{Artes-Llibre-Schlomiuk-Vulpe:2012}. Otherwise, if $g = 0$ and $\ell \neq 0$, then, if $\ell < 0$, we have a triple node $\bar{n}_{(3)}$ and, if $\ell > 0$, we have a triple saddle $\bar{s}_{(3)}$. The multiplicity of the singular point is 3 (if $\ell \neq 0$) as can be checked using the invariants and tables given in \cite{Artes-Llibre-Vulpe:2008}.

In the normal form above, we consider the coefficient of the terms $xy$ in both equations multiplied by 2 in order to make easier the calculations of the algebraic invariants we shall compute later.

After an affine transformation and a time homothety, a quadratic differential system with a semi--elemental triple node can be put into the following normal form:
    \begin{equation}
        \begin{array}{ccl}
            \dot{x} & = & 2 xy + k y^2, \\
            \dot{y} & = & y - x^2 + 2m xy + n y^2, \\
        \end{array}
        \label{eqtn}
    \end{equation}
where $m$, $n$ and $k$ are real parameters. We will study the bifurcations of system \eqref{eqtn} in the 3--dimensional space with coordinates $(m,n,k) \in \R^3$.

\begin{remark} \rm
After rescaling the time, we note that system \eqref{eqtn} is symmetric in relation to the real parameter $k$. So that, we will only consider $k \geq 0$.
\end{remark}

\section{The Poincar\'{e} compactification and the complex (real) foliation with singularities on $\C\PP^2$ ($\R\PP^2$)} \label{sec:poincare}

\indent A real planar polynomial vector field $\xi$ can be compactified on the sphere as follows. Consider the $x$, $y$ plane as being the plane $Z=1$ in the space $\R^3$ with coordinates $X$, $Y$, $Z$. The central projection of the vector field $\xi$ on the sphere of radius one yields a diffeomorphic vector field on the upper hemisphere and also another vector field on the lower hemisphere. There exists (for a proof see \cite {Gonzales:1969}) an analytic vector field $cp(\xi)$ on the whole sphere such that its restriction on the upper hemisphere has the same phase curves as the one constructed above from the polynomial vector field. The projection of the closed northern hemisphere $H^+$ of $\,\SC^2$ on $Z=0$ under $(X,Y,Z) \to (X,Y)$ is called \textit{the Poincar\'{e} disc}. A singular point $q$ of $cp(\xi)$ is called an \textit{infinite} (respectively, \textit{finite}) singular point if $q\in \SC^1$, the equator (respectively, $q\in \SC^2 \setminus \SC^1$). By the \textit{Poincar\'{e} compactification of a polynomial vector field} we mean the vector field $cp(\xi)$ restricted to the upper hemisphere completed with the equator.

Ideas in the remaining part of this section go back to Darboux's work \cite{Darboux:1878}. Let $p(x,y)$ and $q(x,y)$ be polynomials with real coefficients. For the vector field
    \begin{equation}\label{21}
        p\frac{\p}{\p x}+ q\frac{\p}{\p y},
    \end{equation}
or equivalently for the differential system
    \begin{equation}\label{22}
        \dot x = p(x,y), \qquad \dot y= q(x,y),
    \end{equation}
we consider the associated differential $1$--form \linebreak $\o_{1}= q(x,y)dx- p(x,y)dy$, and the differential equation
    \begin{equation}\label{23}
        \o_{1}=0 \ .
    \end{equation}
Clearly, equation \eqref{23} defines a foliation with singularities on $\C^2$. The affine plane $\C^2$ is compactified on the complex projective space $\C\PP^2= (\C^3\setminus \{0\})/\sim$, where $(X,Y,Z)\sim (X',Y',Z')$ if and only if $(X,Y,Z)= \la (X',Y',Z')$ for some complex $\la\ne 0$. The equivalence class of $(X,Y,Z)$ will be denoted by $[X:Y:Z]$.

The foliation with singularities defined by equation \eqref{23} on $\C^2$ can be extended to a foliation with singularities on $\C\PP^2$ and the $1$--form $\o_{1}$ can be extended to a meromorphic $1$--form $\o$ on $\C\PP^2$ which yields an equation $\o=0$, i.e.
    \begin{equation}\label{ja1}
        A(X,Y,Z)dX+B(X,Y,Z)dY+C(X,Y,Z)dZ=0,
    \end{equation}
whose coefficients $A$, $B$, $C$ are homogeneous polynomials of the same degree and satisfy the relation:
    \begin{equation}\label{ja2}
        A(X,Y,Z)X+B(X,Y,Z)Y+C(X,Y,Z)Z=0,
    \end{equation}
Indeed, consider the map $i: \C^3 \setminus \{Z = 0\} \to \C^2$, given by $i(X,Y,Z)= (X/Z,Y/Z)=(x,y)$ and suppose that $\max\{\deg(p),\deg(q)\}= m>0$. Since $x=X/Z$ and $y=Y/Z$ we have:
    \[
        dx= (ZdX-XdZ)/Z^2, \quad dy= (ZdY-YdZ)/Z^2,
    \]
the pull--back form $i^*(\o_{1})$ has poles at $Z=0$ and yields the equation
    \[\aligned
        i^*(\o_{1})=& q(X/Z,Y/Z) (ZdX-XdZ)/Z^2 \\
                    & -p(X/Z,Y/Z) (ZdY-YdZ)/Z^2 = 0.
    \endaligned \]
Then, the $1$--form $\o= Z^{m+2} i^*(\o_{1})$ in $\C^3\setminus \{Z\ne 0\}$ has homogeneous polynomial coefficients of degree $m+1$, and for $Z=0$ the equations $\o=0$ and $i^*(\o_{1})=0$ have the same solutions. Therefore, the differential equation $\o=0$ can be written as \eqref{ja1}, where
    \be\aligned\label{24}
        A(X,Y,Z) =& Z Q(X,Y,Z)= Z^{m+1} q(X/Z,Y/Z),\\
        B(X,Y,Z) =& -Z P(X,Y,Z)=-Z^{m+1} p(X/Z,Y/Z),\\
        C(X,Y,Z) =& Y P(X,Y,Z)- X Q(X,Y,Z).
    \endaligned\ee

Clearly $A$, $B$ and $C$ are homogeneous polynomials of degree $m+1$ satisfying \eqref{ja2}.

In particular, for our quadratic systems \eqref{eqtn}, $A$, $B$ and $C$ take the following forms
    $$\begin{aligned}
        A(X,Y,Z)=& ZQ(X,Y,Z)=-Z(X^2-2mXY\\
                 &-nY^2-YZ),\\
        B(X,Y,Z)=& -ZP(X,Y,Z)=-YZ(2X+kY),
    \end{aligned}$$
    \begin{equation} \label{ja3} \begin{aligned}
        C(X,Y,Z)=& YP(X,Y,Z)-XQ(X,Y,Z) \\
                =& X^3-2mX^2Y+2XY^2\\
                 &-nXY^2+kY^3-XYZ.
    \end{aligned} \end{equation}

We note that the straight line $Z=0$ is always an algebraic invariant curve of this foliation and that its singular points are the solutions of the system: $A(X,Y,Z)= B(X,Y,Z)= C(X,Y,Z)=0$. We note also that $C(X,Y,Z)$ does not depend on $b$.

To study the foliation with singularities defined by the differential equation \eqref{ja1} subject to \eqref{ja2} with $A$, $B$, $C$ satisfying the above conditions in the neighborhood of the line $Z=0$, we consider the two charts of $\C\PP^2$: $(u,z)= (Y/X,Z/X)$, $X\ne 0$, and $(v,w)= (X/Y,Z/Y)$, $Y\ne 0$, covering this line. We note that in the intersection of the charts $(x,y)= (X/Z,Y/Z)$ and $(u,z)$ (respectively, $(v,w)$) we have the change of coordinates $x=1/z$, $y=u/z$ (respectively, $x=v/w$, $y=1/w$). Except for the point $[0:1:0]$ or the point $[1:0:0]$, the foliation defined by equations \eqref{ja1},\eqref{ja2} with $A$, $B$, $C$ as in \eqref{24} yields in the neighborhood of the line $Z=0$ the foliations associated with the systems
    \be \begin{aligned}
        \dot u =& uP(1,u,z)-Q(1,u,z)= C(1,u,z), \\
        \dot z=& z P(1,u,z), \label{26}
    \end{aligned} \ee
or
    \be \begin{aligned}
        \dot v =& vQ(v,1,w)-P(v,1,w)= -C(v,1,w), \\
        \dot w =& w P(v,1,w). \label{27}
    \end{aligned} \ee

In a similar way we can associate a real foliation with singularities on $\R\PP^2$ to a real planar polynomial vector field.

\section{A few basic properties of quadratic systems relevant for this study} \label{sec:basicwf}

\indent We list below results which play a role in the study of the global phase portraits of the real planar quadratic systems \eqref{eq:qs} having a semi--elemental triple node.

The following results hold for any quadratic system:
    \begin{enumerate}[(i)]
        \item \label{item_i} A straight line either has at most two (finite) contact points with a quadratic system (which include the singular points), or it is formed by trajectories of the system; see Lemma 11.1 of \cite{Ye:1986}. We recall that by definition a {\it contact point} of a straight line $L$ is a point of $L$ where the vector field has the same direction as $L$, or it is zero.

        \item \label{item_ii} If a straight line passing through two real finite singular points $r_{1}$ and $r_{2}$ of a quadratic system is not formed by trajectories, then it is divided by these two singular points in three segments $\ov{\infty r_{1}}$, $\ov{r_{1} r_{2}}$ and $\ov{r_{2} \infty}$ such that the trajectories cross $\ov{\infty r_{1}}$ and $\ov{r_{2}\infty}$ in one direction, and they cross $\ov{r_{1} r_{2}}$ in the opposite direction; see Lemma 11.4 of \cite{Ye:1986}.

        \item If a quadratic system has a limit cycle, then it surrounds a unique singular point, and this point is a focus; see \cite{Coppel:1966}.
    \end{enumerate}

\begin{theorem}\label{th:4.2} Any graphic or degenerated graphic in a real planar polynomial differential system must either
\begin{enumerate}[1)]
\item surround a singular point of index greater than or equal to +1, or
\item contain a singular point having an elliptic sector situated in the region delimited by the graphic, or
\item contain an infinite number of singular points.
\end{enumerate}
\end{theorem}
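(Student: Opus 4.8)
The plan is to derive the statement from the Poincar\'e index theorem together with the theorem of turning tangents, controlling the contribution of the singular points lying \emph{on} the graphic through their local sectorial decomposition. First I would dispose of alternative (3): if the graphic $\Gamma$ carries infinitely many singular points we are already in case (3), so from now on assume $\Gamma$ contains only finitely many singularities $r_1,\dots,r_n$. I would work on the Poincar\'e disc, so that graphics running through infinity are included; then $\Gamma$ is a closed loop bounding at least one complementary region, and I let $R$ be the bounded region it delimits (the region referred to in the statement). Its smooth arcs are connecting orbits $\gamma_1,\dots,\gamma_n$, along each of which the vector field $X$ is tangent to $\Gamma$ and points in the direction in which the graphic is traversed; the only corners of $\Gamma$ are the singular points $r_i$.

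Next I would set up the index count. Choose a simple closed $C^1$ curve $\gamma\subset R$, homotopic to $\Gamma$ in $\overline{R}\setminus\{r_1,\dots,r_n\}$ and avoiding all singularities, staying $\varepsilon$-close to $\Gamma$ along the orbit arcs and detouring slightly into $R$ near each $r_i$. By the Poincar\'e index theorem the index of $X$ along $\gamma$ equals the sum of the indices of the singular points enclosed by $\gamma$, that is, the singular points contained in $R$:
\[
\sum_{p\in R} i(p)\;=\;\mathrm{ind}_{\gamma}(X).
\]
To evaluate the right-hand side I would invoke the theorem of turning tangents: since $\gamma$ is a positively oriented simple closed curve, its tangent direction rotates by exactly $+2\pi$. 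Along the orbit arcs the direction of $X$ agrees with the tangent up to an error that tends to $0$ as $\gamma\to\Gamma$, so the entire discrepancy between $\mathrm{ind}_{\gamma}(X)$ and $+1$ is concentrated in the extra rotation of $X$ relative to the tangent as $\gamma$ rounds each corner $r_i$ through the interior of $R$.

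The heart of the argument is then the local analysis at each boundary singular point $r_i$. Near $r_i$ the phase portrait splits, by Bendixson's theorem, into finitely many hyperbolic, parabolic and elliptic sectors, and only the sectors facing $R$ (those lying between the incoming and outgoing arcs of $\Gamma$ on the $R$-side) influence the detour of $\gamma$. The key lemma I would establish is that, if \emph{no} boundary singular point has an elliptic sector situated in $R$, then the rotation contributed by the detours cannot lower the total below $+1$, so that
\[
\sum_{p\in R} i(p)\;\ge\;1 .
\]
Granting this, the case analysis closes the proof: if some $r_i$ has an elliptic sector in $R$ we are in alternative (2); otherwise $\sum_{p\in R} i(p)$ is a positive integer, so at least one singular point of $R$ has index $\ge +1$, which is alternative (1). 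Together with the preliminary reduction, this exhausts the possibilities.

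The step I expect to be the main obstacle is precisely this local index bookkeeping: making rigorous the claim that the rotation of $X$ along the small sub-arc of $\gamma$ around each $r_i$ is governed by the number and type of sectors of $r_i$ on the $R$-side, and that only elliptic sectors can produce a positive contribution large enough to matter. A careful Bendixson-type computation of the field rotation through each sector is needed here, and one must also handle \emph{degenerate} graphics, where a singular point is visited more than once or $\Gamma$ is self-tangent, so that $\gamma$ may have to round the same point along several disjoint sub-arcs. The remaining verifications---that a curve $\gamma$ with the stated properties exists, that the error along the orbit arcs genuinely vanishes in the limit, and that the construction behaves well at infinity on the Poincar\'e disc---are routine but require care in the neighborhoods of the singular points.
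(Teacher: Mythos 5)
Your strategy is essentially the one the paper follows: discard case (3), regard the graphic together with the region $R$ it bounds as an invariant set homeomorphic to a disk, use that the index of such a region is $+1$, and account for the boundary singular points through the elliptic and hyperbolic sectors they present to $R$. The difference is that the paper does not redo this index theory: it quotes from Art\'es--Kooij--Llibre [1998] both the formula $\sum_{i=1}^{n}(E_i-H_i+1)/2$ for the contribution of the boundary points and Proposition 4.8 of that paper (the index of an invariant region homeomorphic to $\mathbb{D}^2$ containing finitely many singular points is $+1$), and then reads off the trichotomy. What you call ``the key lemma I would establish'' is precisely this imported material, and you leave it unproved --- you yourself flag the local rotation count at each $r_i$ as ``the main obstacle.'' So at the only step that carries real content the proposal is a plan rather than a proof; to complete it you would either have to carry out the Bendixson-type sector computation in detail (including the degenerate cases you mention, where a point is visited several times or the graphic is self-tangent) or simply cite the reference, as the paper does.

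A further caution: the lemma as you state it --- if no elliptic sector faces $R$ then $\sum_{p\in R} i(p)\ge 1$ --- is stronger than what the sector bookkeeping delivers. With $E_i=0$ the $i$-th boundary point contributes $(1-H_i)/2$, which equals $+1/2$ when no hyperbolic sector faces $R$ either (only parabolic sectors on the $R$-side), so the identity
$$\sum_{p\in R} i(p)+\sum_{i=1}^{n}\frac{E_i-H_i+1}{2}=1$$
does not by itself force the interior sum to be at least $1$. The correct route (the one the paper takes, if tersely) is to argue from this identity that some term must be positive and then to deal with the possibility that the only positive terms come from boundary points with $E_i=H_i=0$; your write-up should address that configuration explicitly rather than asserting the stronger inequality.
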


\begin{proof} Let $S$ be a simply connected invariant closed set under the flow of a vector field. In \cite{Artes-Kooij-Llibre:1998} the \textit{index of $\partial S$} is given by: $\sum^n_{i=1} (E_i-H_i+1)/2$, where $E_i$ (respectively, $H_i$) is the number of elliptic (respectively, hyperbolic) sectors which are inside the region delimited by $S$ of the singular points forming the border. Also the \textit{index of $S$} is given by the index of $\partial S$ plus the sum of the indices of the singular points in the interior of $S$.

From the same paper, Proposition 4.8 claims that given a vector field $X$ or $p(X)$ and $S$ an invariant region topologically equivalent to $\mathbb D^2$ (the closed disk) containing a finite number of singular points (both in $\partial S$ or its interior), then the index of $S$ is always $+1$.

Now, assume that we have a graphic of a polynomial system. If it contains an infinite number of singular points (either finite or infinite) we are done. Otherwise, such a graphic altogether with its interior is an invariant region as defined in \cite{Artes-Kooij-Llibre:1998} and must have index +1. Since the index is positive, we must have some element, either in the interior or on the border which makes the index positive, and this implies the existence of either a point of index greater than or equal to +1 in its interior or at least one elliptic sector coming from a singular point on the border and situated in the region delimited by the graphic.
\end{proof}

\section{Some algebraic and geometric concepts} \label{sec:internum}

\indent In this article we use the concept of intersection number for curves (see \cite{Fulton:1969}). For a quick summary see Sec. 5 of \cite{Artes-Llibre-Schlomiuk:2006}.

We shall also use the concepts of zero--cycle and divisor (see \cite{Hartshorne:1977}) as specified for quadratic vector fields in \cite{Schlomiuk-Pal:2001}. For a quick summary see Sec. 6 of \cite{Artes-Llibre-Schlomiuk:2006}.

We shall also use the concepts of algebraic invariant and T--comitant as used by the Sibirskii school for differential equations. For a quick summary see Sec. 7 of \cite{Artes-Llibre-Schlomiuk:2006}.

The invariants which are relevant in this study are among the ones which were relevant in the study of $QW_{2}$ and will be described in the next section.

\section{The bifurcation diagram of the systems with a semi--elemental triple node} \label{sec:bifur}

\subsection{Bifurcation surfaces due to the changes in the nature of singularities}

For systems \eqref{eqtn} we will always have $(0,0)$ as a finite singular point, a semi--elemental triple node.

From Sec. 7 of \cite{Artes-Llibre-Vulpe:2008} we get the formulas which give the bifurcation surfaces of singularities in $\R^{12}$, produced by changes that may occur in the local nature of finite singularities. From \cite{Schlomiuk-Vulpe:2005} we get equivalent formulas for the infinite singular points. These bifurcation surfaces are all algebraic and they are the following:

\medskip

\noindent \textbf{Bifurcation surfaces in $\R^3$ due to multiplicities of singularities}

\medskip

\noindent {\bf (${\cal S}_{1}$)} This is the bifurcation surface due to multiplicity of infinite singularities as detected by the coefficients of the divisor $D_\R(P,Q;Z)= \sum_{W\in \{Z=0\}\cap \C\PP^2} I_W (P,Q) W$, (here $I_W (P,Q)$ denotes the intersection multiplicity of $P=0$ with $Q=0$ at the point $W$ situated on the line at infinity, i.e. $Z=0$) whenever $deg((D_\R(P,Q;Z)))>0$. This occurs when at least one finite singular point collides with at least one infinite point. More precisely this happens whenever the homogenous polynomials of degree two, $p_2$ and $q_2$ in $p$ and $q$ have a common root. In other words whenever \linebreak $\mu=$ Res$_x(p_2,q_2)/y^4= 0$. The equation of this surface is $$\mu=k^2+4km-4n=0.$$

\medskip

\noindent {\bf (${\cal S}_{5}$)}\footnote{The numbers attached to these bifurcations surfaces do not appear here in increasing order. We just kept the same enumeration used in \cite{Artes-Llibre-Schlomiuk:2006} to maintain coherence even though some of the numbers in that enumeration do not occur here.} This is the bifurcation surface due to multiplicity of infinite singularities as detected by the coefficients of $D_\C(C,Z)=\sum_{W\in \{Z=0\}\cap \C\PP^2} I_W (C,Z) W$, i.e. this bifurcation occurs whenever at a point $W$ of intersection of $C=0$ with $Z=0$ we have $I_W(C,Z)\ge 2$, i.e. when at least two infinite singular points collide at $W$. This occurs whenever the discriminant of $C_2=C(X,Y,0)=Yp_2(X,Y)-Xq_2(X,Y)$ is zero where by $p_2,q_2$ we denoted the second degree terms in $p,q$. We denote by $\eta$ this discriminant. The equation of this surface is
    $$\begin{aligned}
        \eta = & -32 - 27k^2 - 72km + 16m^2 + 32km^3 + 48n + \\
               & 36kmn - 16m^2n - 24n^2 + 4m^2n^2 + 4n^3=0. \\
    \end{aligned}$$

\medskip

\noindent \textbf{$C^{\infty}$ bifurcation surface in $\R^3$ due to a strong saddle or a strong focus changing the sign of their traces (weak saddle or weak focus)}

\medskip

\noindent {\bf (${\cal S}_{3}$)} This is the bifurcation surface due to the weakness of finite singularities, which occurs when the trace of a finite singular point is zero. The equation of this surface is given by $$\mathcal{T}_4=8+k^2+4n=0,$$ where $\mathcal{T}_4$ is defined in \cite{Vulpe:2011}. This $\mathcal{T}_4$ is an invariant.

\medskip

\noindent \textbf{$C^{\infty}$ bifurcation surface in $\R^3$ due to a node becoming a focus}

\medskip

\noindent {\bf (${\cal S}_{6}$)} This surface will contain the points of the parameter space where a finite node of the system turns into a focus. This surface is a $C^{\infty}$ but not a topological bifurcation surface. In fact, when we only cross the surface (${\cal S}_{6}$) in the bifurcation diagram, the topological phase portraits do not change. However, this surface is relevant for isolating the regions where a limit cycle surrounding an antisaddle (different from the triple node) cannot exist. Using the results of \cite{Artes-Llibre-Vulpe:2008}, the equation of this surface is given by $W_4=0$, where $$W_4=64 + 48k^2 + k^4 + 128km - 64n + 8k^2n + 16n^2.$$

These are all the bifurcation surfaces of singularities of the systems \eqref{eqtn} in the parameter space and they are all algebraic. We shall discover another bifurcation surface not necessarily algebraic and on which the systems have global connection of separatrices. The equation of this bifurcation surface can only be determined approximately by means of numerical tools. Using arguments of continuity in the phase portraits we can prove the existence of this not necessarily algebraic component in the region where it appears, and we can check it numerically. We will name it the surface (${\cal S}_{7}$).

\begin{figure}
\psfrag{m}{$m$} \psfrag{n}{$n$} \psfrag{k}{$k$}
\centerline{\psfig{figure=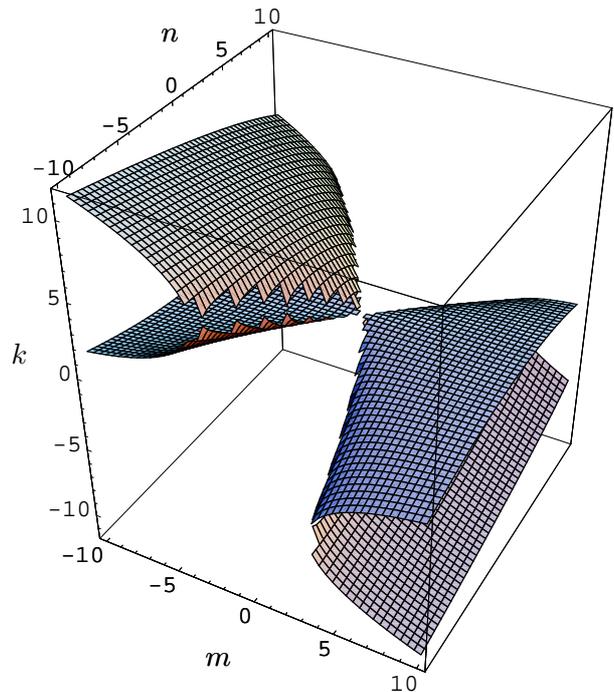,width=8cm}} \centerline {}
\caption{\small \label{fig:3d2} The 3--dimensional picture of the surface (${\cal S}_{6}$) (when a finite node becomes a focus).}
\end{figure}

\begin{remark} \rm
Even though we can draw a 3--dimensional picture of the algebraic bifurcation surfaces of singularities in $\R^3$ (see Fig. \ref{fig:3d2} for an example), it is pointless to try to see a single 3--dimensional image of all these four bifurcation surfaces together in the space $\R^3$. As we shall see later, the full partition of the parameter space obtained from all these bifurcation surfaces has 63 parts.
\end{remark}

Due to the above remark we shall foliate the 3--dimensional bifurcation diagram in $\R^3$ by planes $k=k_0$, $k_0$ constant. We shall give pictures of the resulting bifurcation diagram on these planar sections on an affine chart on $\R^2$. In order to detect the key values for this foliation, we must find the values of parameters where the surfaces intersect to each other. As we mentioned before, we will be only interested in non-negative values of $k$ to construct the bifurcation diagram.

The following set of seven results study the singularities of each surface and the simultaneous intersection points of the bifurcation surfaces, or the points or curves where two bifurcation surfaces are tangent.

As the final bifurcation diagram is quite complex, it is useful to introduce colors which will be used to talk about the bifurcation points:

    \begin{enumerate}[(a)]
        \item the curve obtained from the surface (${\cal S}_{1}$) is drawn in blue (a finite singular point collides with an infinite one);

        \item the curve obtained from the surface (${\cal S}_{3}$) is drawn in green (when the trace of a singular point becomes zero);

        \item the curve obtained from the surface (${\cal S}_{5}$) is drawn in red (two infinite singular points collide);

        \item the curve obtained from the surface (${\cal S}_{6}$) is drawn in black (an antisaddle different from the origin is on the verge of turning from a node to a focus or vice versa); and

        \item the curve obtained from the surface (${\cal S}_{7}$) is drawn in purple (the connection of separatrices).
    \end{enumerate}

\begin{lemma}\label{lem:lemma1} Concerning the singularities of the surfaces, it follows that:
    \begin{enumerate}[(i)]
        \item (${\cal S}_{1}$) and (${\cal S}_{3}$) have no singularities;
        \item (${\cal S}_{5}$) has a curve of singularities given by $4m^2+3n-6=0$;
        \item (${\cal S}_{6}$) has a singularity on the straight line $(m,2,0)$ on slice $k=0$. Besides, this surface restricted to $k=0$ is part of the surface (${\cal S}_{5}$).
    \end{enumerate}
\end{lemma}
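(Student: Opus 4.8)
The singular locus of a surface $\{F=0\}\subset\R^3$ is the set of points where $F$ together with all its first-order partials vanish, so in each case the plan is simply to inspect the gradient. Items concerning $(\mathcal{S}_1)$ and $(\mathcal{S}_3)$ are immediate: since $\partial\mu/\partial n=-4$ and $\partial\mathcal{T}_4/\partial n=4$ are nonzero constants, the gradients $\nabla\mu$ and $\nabla\mathcal{T}_4$ never vanish, and hence neither surface has a singular point, which settles (i).

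The heart of the lemma is item (ii), and the key remark I would establish first is that $\eta$ is exactly the discriminant in $(X,Y)$ of the binary cubic form $C_2=C(X,Y,0)=X^3-2mX^2Y+(2-n)XY^2+kY^3$ read off from \eqref{ja3}: expanding $18abcd-4b^3d+b^2c^2-4ac^3-27a^2d^2$ with $(a,b,c,d)=(1,-2m,2-n,k)$ reproduces the stated $\eta$. I would then invoke the classical fact that the discriminant hypersurface of a cubic is singular precisely at those cubics having a triple root (for a cubic two distinct double roots would require four roots and so are impossible). Because $(m,n,k)\mapsto(b,c,d)$ is an affine isomorphism, the singular locus of $(\mathcal{S}_5)$ corresponds to the triple-root condition $C_2=(X-rY)^3$, which forces $-2m=-3r$, $2-n=3r^2$ and $k=-r^3$. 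Eliminating $r$ from the first two relations yields $n=2-\tfrac{4}{3}m^2$, that is $4m^2+3n-6=0$, the announced curve (with $k=-8m^3/27$ then determined by membership in the surface). The main obstacle is pinning the singular locus down cleanly; the discriminant viewpoint makes it transparent, though one could instead solve $\eta=\partial_m\eta=\partial_n\eta=\partial_k\eta=0$ directly and verify the solution set coincides with this parametrized curve.

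For item (iii) I would argue straight from $W_4$. Since $\partial W_4/\partial m=128k$, any singular point must satisfy $k=0$; substituting $k=0$ into $\partial W_4/\partial n=-64+8k^2+32n$ gives $n=2$, and then $\partial W_4/\partial k=96k+4k^3+128m+16kn$ reduces to $128m$, forcing $m=0$. Thus the unique singular point of $(\mathcal{S}_6)$ is $(0,2,0)$, which lies on the line $(m,2,0)$. The accompanying tangency is explained by $W_4|_{k=0}=16(n-2)^2$: on the slice $k=0$ the trace of $(\mathcal{S}_6)$ is the doubled line $n=2$, so $(\mathcal{S}_6)$ touches the plane $k=0$ along the entire line $(m,2,0)$. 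Finally, to see this trace is part of $(\mathcal{S}_5)$ I would factor $\eta|_{k=0}=4(n-2)^2(m^2+n-2)$ (consistently, at $k=0$ the cubic splits as $C_2=X\bigl(X^2-2mXY+(2-n)Y^2\bigr)$, whose repeated-root condition separates into $n=2$ and $n=2-m^2$); the factor $(n-2)^2$ shows $\{n=2,\,k=0\}\subset\{\eta=0\}$, completing the comparison with $(\mathcal{S}_5)$.
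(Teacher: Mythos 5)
Your proposal is correct, and for items (i) and (iii) it follows essentially the paper's own route: the paper likewise disposes of $(\mathcal{S}_1)$ and $(\mathcal{S}_3)$ by noting their gradients never vanish, locates the unique singular point $(0,2,0)$ of $(\mathcal{S}_6)$ from $\nabla W_4$, and observes that $W_4|_{k=0}=16(n-2)^2$ places the $k=0$ trace inside $(\mathcal{S}_5)$ (your explicit factorization $\eta|_{k=0}=4(n-2)^2(m^2+n-2)$ and your derivation that $k=0$, $n=2$, $m=0$ are forced is in fact more detailed than the paper's one-line assertion). Where you genuinely diverge is item (ii): the paper computes $\nabla\eta$ directly and verifies that it vanishes exactly along the parametrized curve $m=-3\sqrt[3]{k}/2$, $n=2-3\sqrt[3]{k^2}$, then checks this curve satisfies $4m^2+3n-6=0$; you instead recognize $\eta$ as the discriminant of the binary cubic $C_2=X^3-2mX^2Y+(2-n)XY^2+kY^3$ and invoke the classical fact that the discriminant hypersurface of binary cubics is singular precisely along the triple-root locus, from which the curve drops out by eliminating the root parameter $r$. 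Your route is more conceptual and explains \emph{why} the singular locus is a curve (it is the image of the twisted-cubic locus $C_2=(X-rY)^3$), at the cost of one detail you should make explicit: the singular locus of $(\mathcal{S}_5)$ is defined by the vanishing of the partials in $(m,n,k)$, i.e.\ in $(b,c,d)$ with $a=1$ held fixed, whereas the triple-root characterization concerns the full gradient including $\partial_a\Delta$. Since the discriminant is homogeneous of degree $4$ in $(a,b,c,d)$, Euler's identity $a\partial_a\Delta+b\partial_b\Delta+c\partial_c\Delta+d\partial_d\Delta=4\Delta$ together with $a=1\neq 0$ shows that on $\{\Delta=0\}$ the vanishing of the three partials in $b,c,d$ forces $\partial_a\Delta=0$ as well, so the two notions of singular point coincide and your identification is legitimate. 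With that one line added, both arguments establish the same curve, and your parametrization $m=3r/2$, $n=2-3r^2$, $k=-r^3$ agrees with the paper's after the substitution $r=-\sqrt[3]{k}$.
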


\begin{proof} It is easy to see that the gradient of (${\cal S}_{1}$) and (${\cal S}_{3}$) is never null for all $(m,n,k) \in \R^3$; so \emph{(i)} is proved. In order to prove \emph{(ii)} we compute the gradient of $\eta$ and we verify that it is null whenever $m=-3\sqrt[3]{k}/2$ and $n=2-3\sqrt[3]{k^2}$, for all $k \geq 0$. It is easy to see that these values of $m$ and $n$ for all $k \geq 0$ lie on the curve $4m^2+3n-6=0$. Finally, considering the gradient of the surface (${\cal S}_{6}$), it is identically zero at the point $(0,2,0)$ which lies on the straight line $(m,2,0)$ whenever $k=0$. Moreover, if $k=0$, we see that the equation of (${\cal S}_{6}$) is $(-2+n)^2$, which is part of (${\cal S}_{5}$), proving \emph{(iii)}.
\end{proof}

\begin{lemma}\label{lem:lemma2} If $k=0$, the surfaces (${\cal S}_{1}$) and (${\cal S}_{3}$) have no intersection. For all $k>0$, they intersect in the point $(-(4+k^2)/2k,-2-k^2/4,k)$.
\end{lemma}

\begin{proof} By solving simultaneously both equations of the surfaces (${\cal S}_{1}$) and (${\cal S}_{3}$) for all $k>0$, we obtain the point $(-(4+k^2)/2k,-2-k^2/4,k)$. We also note that if $k=0$ there is no intersection point.
\end{proof}

\begin{lemma}\label{lem:lemma3} If $k=0$, the surfaces (${\cal S}_{1}$) and (${\cal S}_{5}$) intersect at the points $(-\sqrt{2},0,0)$ and $(\sqrt{2},0,0)$. For all $k>0$, they intersect along the curve $\gamma_1(m,n) = -64+32m^2+16n-n^2=0$ and they have a $2$--order contact along the curve $\gamma_2(m,n) = 1+2m^2+2n+m^2n+n^2=0$.
\end{lemma}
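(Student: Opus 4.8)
The plan is to realize $({\cal S}_1)\cap({\cal S}_5)$ as the common zero locus of the two defining polynomials $\mu = k^2+4km-4n$ and $\eta$, and to study it by eliminating $k$. I would first dispose of the slice $k=0$ by direct substitution: on $({\cal S}_1)$ the equation $\mu=0$ forces $n=0$, and feeding $k=n=0$ into $\eta$ collapses it to $16(m^2-2)$, whose roots are $m=\pm\sqrt2$. This yields exactly the two points $(\pm\sqrt2,0,0)$ and settles the first assertion. As a consistency check one notes $\gamma_1(\pm\sqrt2,0)=0$ while $\gamma_2(\pm\sqrt2,0)=5\neq0$, so these limiting points sit on the $\gamma_1$ branch, which is what one should expect from the $k>0$ analysis below.

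For $k>0$ the key observation is that \emph{both} $\mu$ and $\eta$ are quadratic in $k$, so I would compute the resultant $\Res_k(\mu,\eta)$, a polynomial in $(m,n)$ whose vanishing characterizes the values of $(m,n)$ for which the two surfaces meet at some $k$. The cleanest route is to reduce $\eta$ modulo $\mu$ (using $k^2\equiv 4n-4mk$) to a polynomial linear in $k$, namely $\eta\equiv U\,k+V \pmod{\mu}$ with $U=32m^3+36mn+36m$ and $V\in\R[m,n]$; substituting $k=-V/U$ into $\mu=0$ then turns the resultant into $V^2-4mUV-4nU^2$. I would verify that this degree-$8$ polynomial factors as $\Res_k(\mu,\eta)=-16\,\gamma_1\,\gamma_2^{\,2}$. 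A quick sanity check is the specialization $m=0$, where the resultant reduces to $16(n+1)^4(n-8)^2$ and $-16\gamma_1\gamma_2^{\,2}$ reduces to the same expression, and likewise at $n=0$ both sides reduce to $-512(m^2-2)(2m^2+1)^2$; together with the matching total degree these pin down the factorization.

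Finally I would interpret the two factors geometrically. Along the branch lying over $\gamma_1=0$ the resultant vanishes simply, the shared root $k$ of the two quadratics is generically simple, and the surfaces meet transversally; this is the stated curve of intersection. Along the branch over $\gamma_2=0$ the squared factor signals tangency, and to make ``$2$-order contact'' precise I would verify that the two surfaces share a tangent plane there: compute $\nabla\mu=(4k,-4,2k+4m)$ and $\nabla\eta$, and show that the cross product $\nabla\mu\times\nabla\eta$ vanishes on the lifted curve $\{\gamma_2=0,\ \mu=0\}$, i.e. that each of its components lies in the ideal generated by $\mu$ and $\gamma_2$ after $k$ is eliminated through $\mu=0$. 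I expect the main obstacle to be purely computational rather than conceptual: the resultant is a polynomial of total degree $8$, and both its clean factorization as $-16\gamma_1\gamma_2^{\,2}$ and the parallel-normals verification along $\gamma_2$ are heavy symbolic manipulations; the reduction modulo $\mu$ described above keeps them tractable, and they are best confirmed with a computer algebra system.
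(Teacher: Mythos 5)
Your proposal follows essentially the same route as the paper: both reduce the problem to computing $\Res_k(\mu,\eta)$ and to the factorization $\Res_k(\mu,\eta)=-16\,\gamma_1\,\gamma_2^{\,2}$, reading transversal intersection off the simple factor $\gamma_1$ and the $2$--order contact off the squared factor $\gamma_2^{\,2}$ (the paper additionally writes the intersection explicitly as the three $k$--parametrized points $r_1,r_2,r_3$, while you add a normal--vector check of the tangency that the paper leaves implicit). One small caveat: the two specializations at $m=0$ and $n=0$ together with the degree count do not by themselves pin down a degree--$8$ polynomial in $(m,n)$, so the factorization identity genuinely requires the full symbolic verification you already anticipate delegating to a computer algebra system.
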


\begin{proof} By solving simultaneously both equations of the surfaces (${\cal S}_{1}$) and (${\cal S}_{5}$) for $k=0$, we obtain the two solutions $m_1=-\sqrt{2}, \ n_1=0$ and $m_2=\sqrt{2}, \ n_2=0$, proving the first part of the lemma. For all $k>0$, the simultaneous solutions of the equations are the three points: $r_1=(-\sqrt{2}-k/4,-\sqrt{2}k,k)$, $r_2=(\sqrt{2}-k/4,\sqrt{2}k,k)$ and $r_3=(-(4+k^2)/2k,-2-k^2/4,k)$. By computing the resultant with respect to $k$ of (${\cal S}_{1}$) and (${\cal S}_{5}$), we see that $\Res_k[({\cal S}_{1}),({\cal S}_{5})]=-16\gamma_1(m,n)(\gamma_2(m,n))^2$, where $\gamma_1(m,n)$ and $\gamma_2(m,n)$ are as stated in the statement of the lemma. It is easy to see that $\gamma_1(m,n)$ has two simple roots which are $r_1$ and $r_2$, and $r_3$ is a double root of $(\gamma_2(m,n))^2$. So that, the surfaces intersect transversally along the curve $\gamma_1(m,n)$ and they have a $2$--order contact along the curve $\gamma_2(m,n)$.
\end{proof}

\begin{lemma}\label{lem:lemma4} If $k=0$, the surfaces (${\cal S}_{1}$) and (${\cal S}_{6}$) have no intersection. For all $k>0$, they have a $2$--order contact along the curve $1+2m^2+2n+m^2n+n^2=0$.
\end{lemma}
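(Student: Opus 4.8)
The plan is to mimic exactly the method used in the proof of Lemma \ref{lem:lemma3}, since the two statements are structurally identical: both ask for the intersection of the blue surface $({\cal S}_1)$ with another algebraic surface, first on the slice $k=0$ and then for $k>0$. I would work directly with the two defining polynomials
\begin{equation}
    \mu = k^2 + 4km - 4n, \qquad W_4 = 64 + 48k^2 + k^4 + 128km - 64n + 8k^2n + 16n^2,
\end{equation}
and treat them as polynomials in the three variables $(m,n,k)$.

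First I would settle the case $k=0$. Setting $k=0$ in $\mu$ gives $\mu = -4n$, so $n=0$; substituting $n=0$ and $k=0$ into $W_4$ yields $W_4 = 64 \neq 0$. Hence the two surfaces cannot meet on the slice $k=0$, which proves the first assertion. For the case $k>0$ I would eliminate a variable by resultants, just as in Lemma \ref{lem:lemma3}. The cleanest route is to solve the linear-in-$n$ equation $\mu=0$ for $n = (k^2+4km)/4$ and substitute into $W_4$; this reduces the intersection to a single polynomial condition in $(m,k)$. Alternatively, computing $\Res_k[(\mathcal S_1),(\mathcal S_6)]$ (or $\Res_n$) should, in analogy with the factorization $-16\,\gamma_1(m,n)(\gamma_2(m,n))^2$ found before, produce a perfect square whose repeated factor is precisely $\gamma_2(m,n)=1+2m^2+2n+m^2n+n^2$. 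The appearance of $\gamma_2$ as a \emph{double} root is exactly what encodes a $2$--order (tangential) contact rather than a transversal crossing, and I expect the single factor $\gamma_1$ to be \emph{absent} here (unlike in Lemma \ref{lem:lemma3}), consistent with the claim that $({\cal S}_1)$ and $({\cal S}_6)$ meet only tangentially along $\gamma_2$.

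To confirm that the contact is genuinely of order $2$ and not merely a coincidence of the resultant vanishing to even order, I would verify that along the curve $\gamma_2=0$ the gradients $\nabla\mu$ and $\nabla W_4$ are parallel, i.e. that the two surfaces share a common tangent plane at each such point, while the second-order jets differ. This is the same tangency criterion implicitly used in Lemma \ref{lem:lemma3}, and it is consistent with the recurring role of $\gamma_2$: the very same curve already appeared in Lemma \ref{lem:lemma3} as the locus of $2$--order contact between $({\cal S}_1)$ and $({\cal S}_5)$, so I would expect the algebra to dovetail with those computations.

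The main obstacle is purely computational bookkeeping rather than conceptual. The surface $W_4$ is quartic in $k$ and quadratic in $n$, so the resultant is a sizeable polynomial, and the delicate point is to carry out the factorization carefully enough to recognize the square of $\gamma_2$ and to rule out any spurious extra factor (in particular to confirm that no analogue of $\gamma_1$ survives). I anticipate that substituting $n=(k^2+4km)/4$ first will tame this considerably, collapsing $W_4$ into an expression whose vanishing locus factors transparently through $\gamma_2$; the tangency verification via parallel gradients is then a short secondary check.
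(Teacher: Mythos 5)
Your plan is correct and follows essentially the same route as the paper: the paper also solves the system directly for $k>0$, obtains the single contact point $r=(-(4+k^2)/2k,\,-2-k^2/4,\,k)$ lying on $1+2m^2+2n+m^2n+n^2=0$, and certifies the $2$--order contact by flattening $({\cal S}_1)$ via the substitution $n=(v+km+k^2)/4$ and checking the sign of $\nabla W_4$ there — which is exactly your proposal of substituting $n=(k^2+4km)/4$ into $W_4$ (yielding the perfect square $4(k^2+2km+4)^2$, with no analogue of $\gamma_1$) together with the parallel-gradient check. Your treatment of the $k=0$ case ($n=0$ forces $W_4=64\neq0$) is also the intended argument, stated more explicitly than in the paper.
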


\begin{proof} By solving the system formed by the equations of the surfaces (${\cal S}_{1}$) and (${\cal S}_{6}$), we find the point $r=(-(4+k^2)/2k,-2-k^2/4,k)$, for all $k>0$, which lies on the curve $1+2m^2+2n+m^2n+n^2=0$. We claim that the surfaces (${\cal S}_{1}$) and (${\cal S}_{6}$) have a 2--order contact point at $r$. Indeed, we have just shown that the point $r$ is a common point of both surfaces. Applying the change of coordinates given by $n=(v+km+k^2)/4$, $v \in \R$, we see that the gradient vector of (${\cal S}_{1}$) is $\nabla\mu(r)=(0,0,0)$ while the gradient vector of (${\cal S}_{6}$) is $\nabla W_{4}(r)=(0,0,8(-4+16/k^2+5k^2))$, whose last coordinate is always positive for all \linebreak $k>0$. As it does not change its sign, the vector $\nabla W_{4}(r)$ will always point upwards in relation to (${\cal S}_{1}$) restricted to the previous change of coordinates. Then, the surface (${\cal S}_{6}$) remains only on one of the half--spaces delimited by the surface (${\cal S}_{1}$), proving our claim.
\end{proof}

\begin{lemma}\label{lem:lemma5} If $k=0$, the surfaces (${\cal S}_{3}$) and (${\cal S}_{5}$) intersect at the points $(-2,-2,0)$ and $(2,-2,0)$. For all $k>0$, they intersect at the points $r_1=((32k-k^3-\sqrt{(64-k^2)^3})/256,-2-k^2/4,k)$, $r_2=(-(4+k^2)/2k,-2-k^2/4,k)$ and $r_3=((32k-k^3+\sqrt{(64-k^2)^3})/256,-2-k^2/4,k)$.
\end{lemma}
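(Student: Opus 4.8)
The plan is to reduce the intersection of $({\cal S}_{3})$ and $({\cal S}_{5})$ to a single polynomial equation in $m$ parametrized by $k$. The surface $({\cal S}_{3})$ is linear in $n$: from $\mathcal{T}_4 = 8 + k^2 + 4n = 0$ one solves immediately $n = -2 - k^2/4$. This already accounts for the common second coordinate of the three candidate points $r_1$, $r_2$, $r_3$, and it reduces the problem to intersecting this value of $n$ with the surface $\eta = 0$.

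Substituting $n = -2 - k^2/4$ into $\eta$ yields a polynomial $P(m,k)$ whose leading term in $m$ is $32k\,m^3$. First I would treat the slice $k = 0$ separately: there the cubic term drops out and $P$ collapses to $64(m^2 - 4)$, whose roots $m = \pm 2$ give exactly $(-2,-2,0)$ and $(2,-2,0)$. This explains why only two intersection points survive on $k=0$, while the third escapes to infinity (indeed $r_2 = -(4+k^2)/(2k) \to -\infty$ as $k \to 0$).

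For $k > 0$ the polynomial $P(m,k)$ is a genuine cubic in $m$, hence has three roots, matching $r_1$, $r_2$, $r_3$. The key observation is that $m = r_2 = -(4+k^2)/(2k)$ --- the recurring point already seen in Lemmas \ref{lem:lemma2} and \ref{lem:lemma4} lying on $({\cal S}_{1})$, $({\cal S}_{3})$ and $({\cal S}_{6})$ --- is again a root; a direct substitution confirms $P(r_2, k) = 0$. I would then divide $P(m,k)$ by the corresponding linear factor $2km + (4+k^2)$ and solve the residual quadratic by the quadratic formula, reading off $r_1$ and $r_3$ as its two roots.

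The main obstacle is purely computational: carrying out the polynomial division cleanly and showing that the discriminant of the residual quadratic simplifies to a perfect-square constant times $(64-k^2)^3$, so that the square root collapses to $\sqrt{(64-k^2)^3}$ and the two roots take the stated symmetric form $(32k - k^3 \mp \sqrt{(64-k^2)^3})/256$. This identity is delicate to verify by hand but is routine with a computer algebra system, consistent with the methodology used throughout the paper.
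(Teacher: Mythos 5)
Your reduction is exactly the paper's (its published proof is a one--liner: solve the system), and every structural step you describe checks out: $({\cal S}_{3})$ gives $n=-2-k^2/4$; substituting into $\eta$ yields a polynomial with leading term $32k\,m^3$ that collapses to $64(m^2-4)$ on $k=0$; and $m=-(4+k^2)/(2k)$ is indeed a root for $k>0$, so the factor $2km+4+k^2$ divides out. However, the one identity you defer to a computer algebra system is false as you state it. Carrying out the division gives the residual quadratic $256m^2+2k(k^2-32)m-(k^4+44k^2+1024)=0$, whose discriminant is $4k^2(k^2-32)^2+1024(k^4+44k^2+1024)=4(k^6+192k^4+12288k^2+262144)=4(64+k^2)^3$. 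Hence the two remaining roots are $(32k-k^3\pm\sqrt{(64+k^2)^3})/256$: the sign inside the cube is plus, and the discriminant is not proportional to $(64-k^2)^3$.

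The discrepancy is not in your method but in the lemma as printed, which carries a sign typo; still, you asserted the $(64-k^2)^3$ simplification without checking it, and that is the step that would fail. Two internal consistency checks expose it: (i) Corollary \ref{le:corollary-l5} requires $r_1=r_2=-3\sqrt{2}/2$ at $k=2\sqrt{2}$, which holds for $\sqrt{(64+k^2)^3}=432\sqrt{2}$ but fails for $\sqrt{(64-k^2)^3}=112\sqrt{14}$ (giving $r_1\approx-1.37\neq-2.12$); (ii) for $k>8$ (e.g.\ the value $k=12$ used in Proposition \ref{prop1}) the printed expression is not even real, although the cubic in $m$ still has three real roots. So your plan is correct and would, if actually executed, have detected and corrected the error rather than confirmed the stated formula.
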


\begin{proof} The result follows easily by solving the system formed by the equations of the surfaces.
\end{proof}

\begin{corollary}\label{le:corollary-l5} If $k=2\sqrt{2}$, the points $r_1$ and $r_2$ of Lemma \ref{lem:lemma5} are equal and they correspond to the singularity of the surface (${\cal S}_{5}$).
\end{corollary}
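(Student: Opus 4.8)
The plan is to deduce the corollary directly from the explicit coordinates furnished by Lemma~\ref{lem:lemma5} together with the description of the singular locus of $({\cal S}_5)$ in Lemma~\ref{lem:lemma1}(ii), specializing everything to $k=2\sqrt{2}$. Since the three points $r_1$, $r_2$, $r_3$ of Lemma~\ref{lem:lemma5} share the common second and third coordinates $n=-2-k^2/4$ and $k$, the assertion $r_1=r_2$ is an identity between their first coordinates alone, and the identification with the singularity reduces to comparing two points of the slice $k=2\sqrt{2}$.

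First I would evaluate the first coordinates. For $r_2$ the value is $-(4+k^2)/2k$, which at $k=2\sqrt{2}$ gives $-3\sqrt{2}/2$. The only step with any friction is the radical carried by the first coordinate of $r_1$: restricted to $({\cal S}_3)$ the quantity under the square root is a perfect cube, equal to $(k^2+64)^3$, so at $k^2=8$ it equals $72^{3/2}=432\sqrt{2}$, whence $(32k-k^3-432\sqrt{2})/256=(48\sqrt{2}-432\sqrt{2})/256=-3\sqrt{2}/2$ as well. Thus $r_1=r_2=(-3\sqrt{2}/2,\,-4,\,2\sqrt{2})$. Checking that the discriminant of the quadratic factor of $\eta$ restricted to $({\cal S}_3)$ is precisely $(k^2+64)^3$ is the one algebraic identity I would verify with care, since it is what makes the surd commensurable with the value $-3\sqrt{2}/2$ coming from $r_2$ at $k^2=8$; I expect this to be the only genuine obstacle, the rest being bookkeeping of already-computed formulas.

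Finally, to match this common point with the singularity of $({\cal S}_5)$, I would invoke the parametrization of the singular points obtained in the proof of Lemma~\ref{lem:lemma1}(ii), namely $m=-3\sqrt[3]{k}/2$ and $n=2-3\sqrt[3]{k^2}$. Since $\sqrt[3]{2\sqrt{2}}=\sqrt{2}$ and $\sqrt[3]{8}=2$, this singular point is exactly $(-3\sqrt{2}/2,\,-4)$ at $k=2\sqrt{2}$, coinciding with $r_1=r_2$. I would also record the conceptual reason for the merging of exactly two of the three points: on the slice $k=2\sqrt{2}$ the surface $({\cal S}_3)$ is the line $n=-4$, which passes through a singular point of the curve $\{\eta=0\}$, and a line through a singular point of a plane curve meets it with intersection multiplicity at least two, so two of the roots $r_1,r_2,r_3$ in the variable $m$ collide there while the third remains simple. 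This explains both that the coincidence is $r_1=r_2$ (rather than $r_1=r_3$, which would instead require the discriminant itself to vanish) and that it occurs precisely at the parameter value where $({\cal S}_3)$ sweeps through the singular locus of $({\cal S}_5)$.
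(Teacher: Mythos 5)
Your proof is correct and follows essentially the same route as the paper's one--line proof, namely direct substitution of $k=2\sqrt{2}$ into the coordinates of $r_1$ and $r_2$ from Lemma~\ref{lem:lemma5} and comparison with the parametrization $m=-3\sqrt[3]{k}/2$, $n=2-3\sqrt[3]{k^2}$ of the singular locus of $({\cal S}_{5})$ obtained in the proof of Lemma~\ref{lem:lemma1}. One point deserves emphasis: your computation uses the radicand $(k^2+64)^3$, whereas Lemma~\ref{lem:lemma5} as printed has $(64-k^2)^3$; redoing the elimination (substituting $n=-2-k^2/4$ into $\eta$ and factoring out the root $m=-(4+k^2)/2k$ leaves the quadratic $256m^2+2k(k^2-32)m-(k^4+44k^2+1024)=0$, whose roots are $\bigl(32k-k^3\pm\sqrt{(k^2+64)^3}\bigr)/256$) confirms that your sign is the correct one --- indeed only with $(k^2+64)^3$ are $r_1,r_3$ real for all $k>0$ and does $r_1$ evaluate to $-3\sqrt{2}/2$ at $k^2=8$, so you have in effect repaired a typo in the lemma rather than introduced an error. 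Your closing observation, that exactly two of the three roots must collide because on the slice $k=2\sqrt{2}$ the line $\mathcal{T}_4=0$ passes through a singular point of $\{\eta=0\}$ and hence meets it with intersection multiplicity at least two there, is a conceptual justification absent from the paper and a worthwhile addition.
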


\begin{proof} Replacing $k=2\sqrt{2}$ at the expressions of the points $r_1$, $r_2$ and $r_3$ described in Lemma \ref{lem:lemma5}, we see that $r_1=r_2$ and they are equal to the singularity $(-3\sqrt{2}/2,-4,2\sqrt{2})$ of the surface (${\cal S}_{5}$).
\end{proof}

\begin{remark} \label{remark:valuesofk} \rm
We observe that the values $k=0$ and $k=2\sqrt{2}$ will be very important to describe the bifurcation diagram due to the ``rich'' change on the behavior of the surfaces.
\end{remark}

\begin{lemma}\label{lem:lemma6} If $k=0$, the surfaces (${\cal S}_{5}$) and (${\cal S}_{6}$) intersect along the straight line $(m,2,0)$, for all \linebreak $m \in \R$. For all $k>0$, they have a $2$--order contact point at $(-(4+k^2)/2k,-2-k^2/4,k)$.
\end{lemma}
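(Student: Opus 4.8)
The plan is to treat the cases $k=0$ and $k>0$ separately, as in the preceding lemmas. The case $k=0$ is immediate: restricting $W_4$ gives $W_4|_{k=0}=64-64n+16n^2=16(n-2)^2$, so the slice of $(\mathcal{S}_6)$ at $k=0$ is the double line $n=2$. Substituting $k=0,\ n=2$ into $\eta$ and checking that the result vanishes identically in $m$ shows that the whole line $(m,2,0)$ also lies on $(\mathcal{S}_5)$; hence the two surfaces meet along $(m,2,0)$, $m\in\R$. This is consistent with the observation in Lemma \ref{lem:lemma1}(iii) that $(\mathcal{S}_6)|_{k=0}$ is part of $(\mathcal{S}_5)$.

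For $k>0$ I would first verify by direct substitution that $r=(-(4+k^2)/2k,-2-k^2/4,k)$ satisfies both $\eta=0$ and $W_4=0$. To pin down the order of contact I would exploit that $W_4$ is \emph{linear} in $m$ with $\partial W_4/\partial m=128k\neq0$, so $W_4=0$ solves globally for $m=m(n,k)$; substituting into $\eta$ and clearing denominators yields a polynomial $\Phi(n,k)$ in $n$ whose roots describe the intersection of the two curves in the slice $k=\mathrm{const}$. The core of the argument is then to show that $n_0=-2-k^2/4$ is a \emph{double} root of $\Phi(\cdot,k)$ for every $k>0$: one verifies $\Phi(n_0,k)=0$ and $\partial\Phi/\partial n\,(n_0,k)=0$, while $\partial^2\Phi/\partial n^2\,(n_0,k)\neq0$. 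The vanishing of the first derivative is the tangency (first--order) condition, and the non--vanishing of the second derivative fixes the contact at order two, the two curves touching without crossing. As a check, $r$ lies on $\gamma_2(m,n)=1+2m^2+2n+m^2n+n^2=0$, along which both $(\mathcal{S}_5)$ and $(\mathcal{S}_6)$ already have $2$--order contact with $(\mathcal{S}_1)$ by Lemmas \ref{lem:lemma3} and \ref{lem:lemma4}, so the tangency at $r$ is geometrically expected.

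The main difficulty is computational: $\eta$ is a dense cubic in $n$ with $k$--dependent coefficients, so after the substitution $\Phi(n,k)$ is sizeable and one must certify that $(n-n_0)^2$ divides it but $(n-n_0)^3$ does not. A point deserving care is $k=2\sqrt{2}$, where by Lemma \ref{lem:lemma1}(ii) and Corollary \ref{le:corollary-l5} the point $r$ is a singularity of $(\mathcal{S}_5)$; the elimination above still applies verbatim, since it tests only the multiplicity of the root $n_0$ and not the smoothness of $(\mathcal{S}_5)$, and because the double--root identity is polynomial in $k$ it extends by continuity to this value. A cleaner alternative, mirroring the resultant computation of Lemma \ref{lem:lemma3}, is to evaluate $\Res_n[\eta,W_4]$ (or $\Res_k[\eta,W_4]$) and to read the order of contact off the multiplicity of the factor cutting out $r$, which I expect to appear squared.
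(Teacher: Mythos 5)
Your proposal is correct in substance but follows a genuinely different route from the paper. For $k=0$ you and the paper do essentially the same thing (restrict and solve), but for $k>0$ the paper does not eliminate a variable: it verifies that $r$ lies on both surfaces and then argues that the slice of $(\mathcal{S}_1)$ (the line $\mu=0$, which also passes through $r$) \emph{separates} the two curves --- via the change of coordinates $n=(v+km+k^2)/4$ it shows $(\mathcal{S}_6)$ stays on one side of that line, and via numerical evaluation it asserts $(\mathcal{S}_5)$ stays on the other side, so the curves can only touch at $r$ without crossing. Your elimination argument (solve $W_4=0$ for $m$ using $\partial W_4/\partial m=128k\neq0$, substitute into $\eta$, and certify that $n_0=-2-k^2/4$ is a root of multiplicity exactly two of the resulting $\Phi(\cdot,k)$) is entirely algebraic and, if the symbolic computation is carried out, actually yields a cleaner certificate than the paper's, since it replaces the numerical half of their argument; a spot check at $k=2$ confirms $\eta(r)=W_4(r)=0$ and $\tfrac{d}{dn}\,\eta(m(n),n,2)\big|_{n=-3}=8\cdot\tfrac12-4=0$, so the tangency condition does hold. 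Two small caveats: first, your continuity remark at $k=2\sqrt{2}$ only propagates the \emph{vanishing} conditions ($\Phi(n_0,k)=0$ and $\partial\Phi/\partial n(n_0,k)=0$); the \emph{non}-vanishing of $\partial^2\Phi/\partial n^2$ does not follow by continuity and must be checked separately there, which matters because $r$ is then the cusp of $(\mathcal{S}_5)$ and the local intersection multiplicity could in principle jump. Second, the paper's separation argument also shows $r$ is the \emph{only} common point of the two curves in the slice, whereas your double-root computation is purely local; if you want that stronger conclusion you would additionally need to verify that $\Phi(\cdot,k)$ has no other real roots. Neither caveat affects the statement as literally written.
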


\begin{proof} Replacing $k=0$ in the equations of the surfaces and solving them in the variables $m$ and $n$, we find that $m\in\R$ and $n=2$, implying the existence of intersection along the straight line $(m,2,0)$, $m\in\R$. For all $k>0$, the solution of the equations of the surfaces is the point $r=(-(4+k^2)/2k,-2-k^2/4,k)$. We claim that the surfaces (${\cal S}_{5}$) and (${\cal S}_{6}$) have a 2--order contact point at $r$. We shall prove this claim by showing that each one of the surfaces (${\cal S}_{5}$) and (${\cal S}_{6}$) remains on only one of the half--spaces delimited by the plane (${\cal S}_{1}$) and their unique common point is $r$. Indeed, it is easy to see that the point $r$ is a common point of the three surfaces. By applying the change of coordinates given by $n=(v+km+k^2)/4$, $v \in \R$, as in the proof of Lemma \ref{lem:lemma4}, we see that the surface (${\cal S}_{6}$) remains on only one of the half--spaces delimited by the plane (${\cal S}_{1}$). On the other hand, numerical calculations show us that the surface (${\cal S}_{5}$) is zero valued around the point $r$ and it assumes negative values otherwise, showing that (${\cal S}_{5}$) remains on the other half--space delimited by the plane (${\cal S}_{1}$).
\end{proof}

\begin{lemma}\label{lem:lemma7}
The curve $r(k)=(-3\sqrt[3]{k}/2,\linebreak 2-3\sqrt[3]{k^2},k)$ of (${\cal S}_{5}$) (i.e., its set of singularities) cannot belong to the region where $W_{4} > 0$ \linebreak and $\mu < 0$.
\end{lemma}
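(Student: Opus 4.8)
The plan is to substitute the parametrization of the singular curve $r(k)$ directly into the two invariants $\mu$ and $W_4$ and to exhibit an algebraic identity relating their values along the curve. First I would set $t=\sqrt[3]{k}$, so that $k=t^3$, $\sqrt[3]{k^2}=t^2$ and $t\ge0$ (since $k\ge0$); this turns $r(k)$ into the polynomial parametrization $m=-3t/2$, $n=2-3t^2$, $k=t^3$. Inserting these into $\mu=k^2+4km-4n$ and collecting terms yields a polynomial in $t$ which is a perfect cube, namely $\mu|_{r(k)}=(t^2-2)^3$. Thus the sign of $\mu$ along the curve is governed entirely by $t^2-2$, and $\mu<0$ exactly when $t^2<2$, i.e. $0\le k<2\sqrt2$.

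Next I would perform the same substitution in $W_4=64+48k^2+k^4+128km-64n+8k^2n+16n^2$. After expansion the constant term and the lower-order terms cancel, leaving $W_4|_{r(k)}=t^{12}-24t^8+64t^6-48t^4=t^4\,(t^8-24t^4+64t^2-48)$. Writing $s=t^2$, the remaining factor is the quartic $s^4-24s^2+64s-48$, and the crucial computational step is to factor it. Checking that $s=2$ is a triple root (it annihilates the quartic together with its first two derivatives) and that the fourth root is $s=-6$, one obtains $s^4-24s^2+64s-48=(s-2)^3(s+6)$, hence $W_4|_{r(k)}=t^4\,(t^2-2)^3\,(t^2+6)$.

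Comparing the two computations gives the identity I am after:
\[
W_4\big|_{r(k)}=t^4\,(t^2+6)\,\mu\big|_{r(k)},\qquad t=\sqrt[3]{k}.
\]
Since $t^4\ge0$ and $t^2+6>0$ for every $k\ge0$, the factor $t^4(t^2+6)$ is nonnegative, vanishing only at $k=0$; hence $W_4$ and $\mu$ have the same sign along $r(k)$ whenever $k>0$. Consequently $\mu<0$ forces $W_4\le0$, and at the single exceptional value $k=0$ one has $W_4=0$ while $\mu=-8<0$, which is again not positive. In all cases the conditions $W_4>0$ and $\mu<0$ cannot hold simultaneously on the curve, which is precisely the assertion.

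The only genuinely delicate point is the factorization of $s^4-24s^2+64s-48$; everything else is bookkeeping. I expect the main obstacle to be recognizing that this quartic carries the factor $(s-2)^3$ rather than merely $(s-2)$, since it is exactly the triple multiplicity of the root $s=2$ that matches the cube in $\mu=(t^2-2)^3$ and produces the clean proportionality $W_4=t^4(t^2+6)\mu$. I would therefore confirm the multiplicity of $s=2$ first (via the vanishing of the quartic and of its first and second derivatives there), after which the sign argument and the conclusion follow immediately.
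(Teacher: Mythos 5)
Your proof is correct and follows essentially the same route as the paper: the paper substitutes the parametrization $r(k)$ and observes that the product $(\mu W_4)\big|_{r(k)}=\left(\sqrt[3]{k^2}-2\right)^6\left(\sqrt[3]{k^2}+6\right)\sqrt[3]{k^4}$ is nonnegative, so $\mu$ and $W_4$ cannot take opposite signs along the curve. Your separate factorizations $\mu|_{r(k)}=(t^2-2)^3$ and $W_4|_{r(k)}=t^4(t^2-2)^3(t^2+6)$ are consistent with (and slightly refine) that identity, and your handling of the boundary cases $k=0$ and $k=2\sqrt{2}$ is sound.
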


\begin{proof}
We consider the real continuous function $g = (\mu W_{4}) \big|_{r(k)} = \left(\sqrt[3]{k^2}-2\right)^6 \left(\sqrt[3]{k^2}+6\right) \sqrt[3]{k^4}$, whose zeroes are $0$ and $2\sqrt{2}$. It is easy to see that $g$ is always positive in $(0,2\sqrt{2}) \cup (2\sqrt{2},\infty)$, implying that the functions $\mu$ and $W_{4}$ calculated at $r(k)$ cannot have different signs, proving the lemma.
\end{proof}

Now we shall study the bifurcation diagram having as reference the values of $k$ where significant phenomena occur in the behavior of the bifurcation surfaces.

According to the Remark \ref{remark:valuesofk}, these values are $k=0$ and $k=2\sqrt{2}$. So, we only need to add two more slices with some intermediate values.

We take then the values:
    \begin{equation}
        \aligned
            k_{0} & = 0, \ & k_{1} = 1,\\
            k_{2} & =2\sqrt{2}, \ & k_{3} = 3.\\
        \endaligned
    \end{equation}

The values indexed by positive even indices correspond to explicit values of $k$ for which there is a bifurcation in the behavior of the systems on the slices. Those indexed by odd ones are just intermediate points (see Figs. \ref{slicek0} to \ref{slicek3_no_purple}).

We now describe the labels used for each part. The subsets of dimensions 3, 2, 1 and 0, of the partition of the parameter space will be denoted respectively by $V$, $S$, $L$ and $P$ for Volume, Surface, Line and Point, respectively. The surfaces are named using a number which corresponds to each bifurcation surface which is placed on the left side of $S$. To describe the portion of the surface we place an index. The curves that are intersection of surfaces are named by using their corresponding numbers on the left side of $L$, separated by a point. To describe the segment of the curve we place an index. Volumes and Points are simply indexed (since three or more surfaces may be involved in such an intersection).

We consider an example: the surface (${\cal S}_{1}$) splits into 5 different two--dimensional parts labeled from $1S_{1}$ to $1S_{5}$, plus some one--dimensional arcs labeled as $1.iL_{j}$ (where $i$ denotes the other surface intersected by (${\cal S}_{1}$) and $j$ is a number), and some zero--dimensional parts. In order to simplify the labels in Figs. \ref{slice0_labels} to \ref{slice3_labels} we see \textbf{V1} which stands for the {\TeX} notation $V_{1}$. Analogously, \textbf{1S1} (respectively, \textbf{1.2L1}) stands for $1S_{1}$ (respectively, $1.2L_{1}$). And the same happens with many other pictures.

Some bifurcation surfaces intersect on $k=0$ or have singularities there. The restrictions of the surfaces on $k=0$ are: the surface (${\cal S}_{5}$) has a singularity at the point $(0,2,0)$ and it is the union of a parabola and a straight line of multiplicity two, which in turn coincides with the bifurcation surface (${\cal S}_{6}$); the surface (${\cal S}_{1}$) coincides with the horizontal axis and the bifurcation surface (${\cal S}_{3}$) becomes a straight line parallel to the horizontal line having intersection points only with the surface (${\cal S}_{5}$).

As an exact drawing of the curves produced by intersecting the surfaces with slices gives us very small regions which are difficult to distinguish, and points of tangency are almost impossible to recognize, we have produced topologically equivalent figures where regions are enlarged and tangencies are easy to observe. The reader may find the exact pictures in the web page http://mat.uab.es/$\sim$artes/articles/qvftn/qvftn.html.

As we increase the value of $k$, other changes in the bifurcation diagram happen. When $k=1$, the surface (${\cal S}_{5}$) has two connected components and a cusp point as a singularity which remains on the left side of the surface (${\cal S}_{6}$) until $k=2\sqrt{2}$ (see Fig. \ref{slicek1}). At this value, the cusp point is the point of contact among all the surfaces, as we can see in Fig. \ref{slicek2sqrt2}, and when $k=3$, the cusp point of (${\cal S}_{5}$) lies on the right side of surfaces (${\cal S}_{6}$) and (${\cal S}_{1}$) (see Fig. \ref{slicek3_no_purple} and Lemma \ref{lem:lemma7}). In order to comprehend that the ``movement'' of the cusp point of the surface (${\cal S}_{5}$) implies changes that occur in the bifurcation diagram, we see that when $k=1$ we have a ``curved triangular'' region formed by the surfaces (${\cal S}_{3}$) and (${\cal S}_{5}$), the cusp point of (${\cal S}_{5}$) and the points of intersection between both surfaces. The ``triangle'' bounded by these elements yields 15 subsets: three 3--dimensional subsets, seven 2--dimensional ones and five 1--dimensional ones. In Fig. \ref{slicek2sqrt2} we see that the ``triangle'' has disappeared and it has become a unique point which corresponds to the point of contact of all the surfaces and the cusp point of surface (${\cal S}_{5}$). Finally, when $k=3$, the ``triangle'' reappears and yields also 15 subsets of same dimensions, but different from the previous ones.

\begin{figure}
\centering
\psfrag{m}{$m$} \psfrag{n}{$n$}
\centerline{\psfig{figure=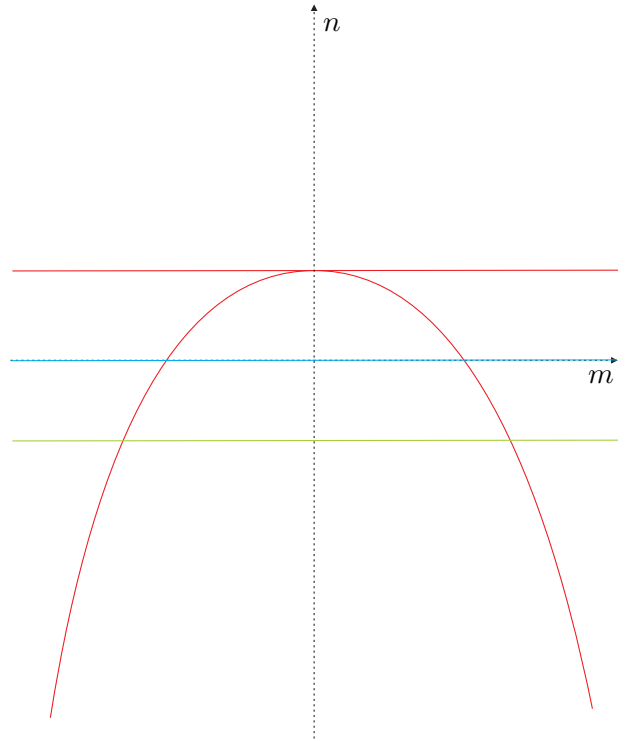,width=8cm}} \centerline {}
\caption{\small \label{slicek0} Slice of parameter space when $k=0$.}
\end{figure}

\begin{figure}
\centering
\psfrag{m}{$m$} \psfrag{n}{$n$}
\centerline{\psfig{figure=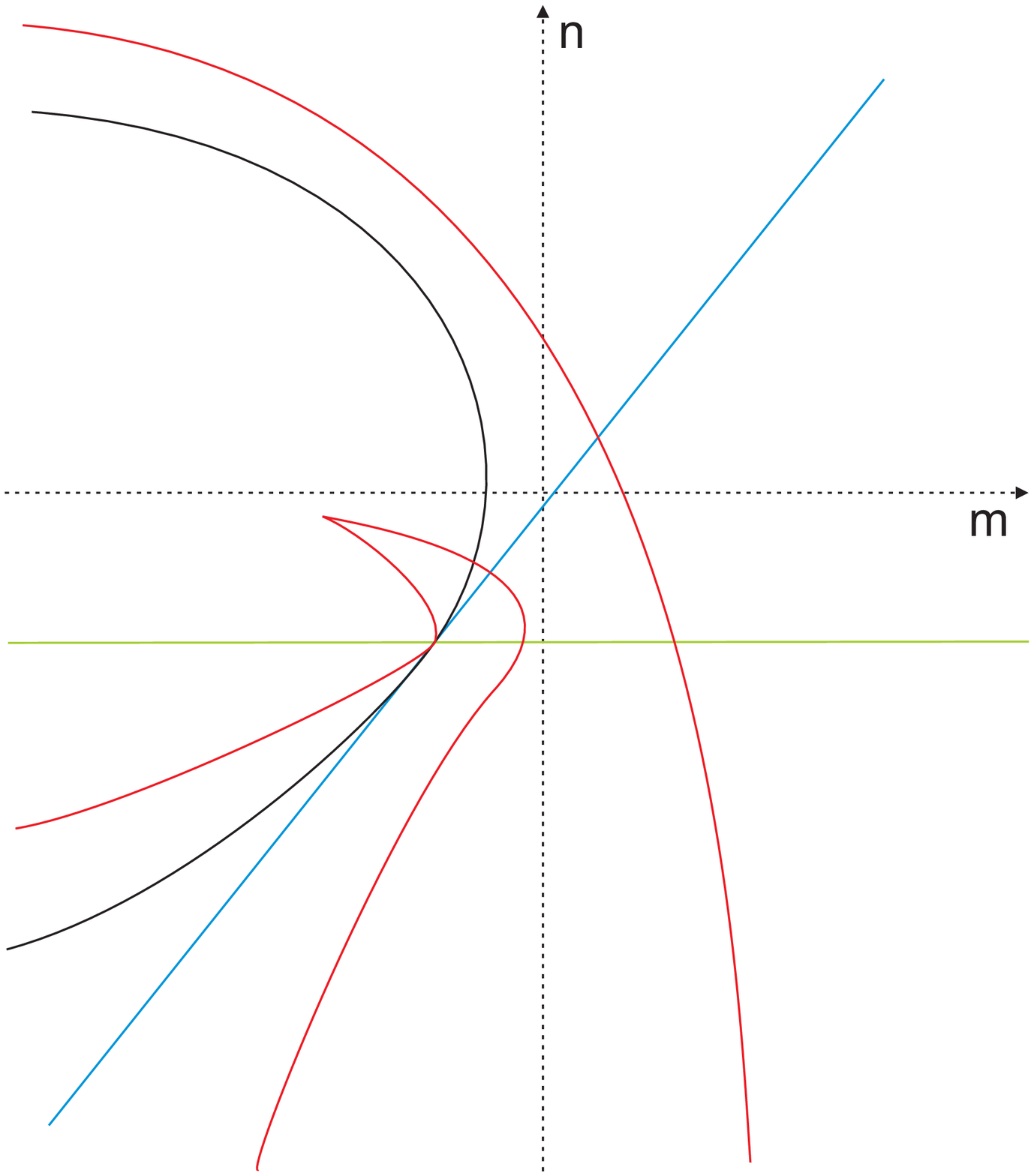,width=8cm}} \centerline {}
\caption{\small \label{slicek1} Slice of parameter space when $k=1$.}
\end{figure}

\begin{figure}
\centering
\psfrag{m}{$m$} \psfrag{n}{$n$}
\centerline{\psfig{figure=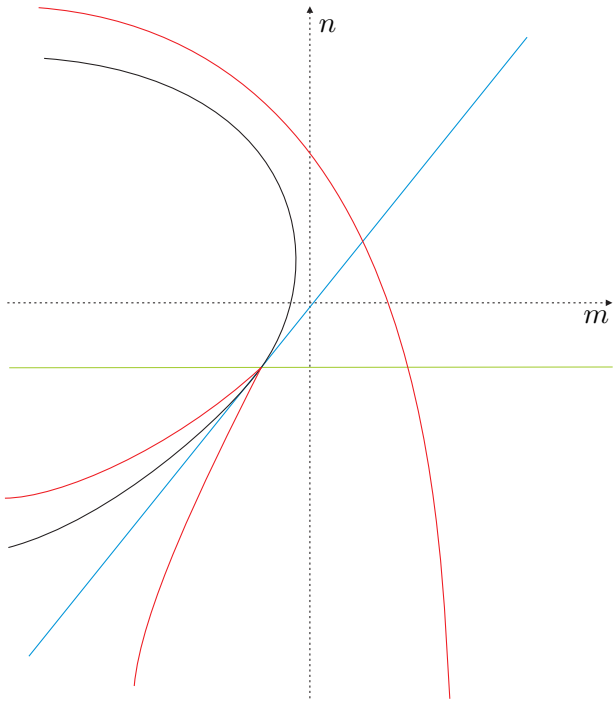,width=8cm}} \centerline {}
\caption{\small \label{slicek2sqrt2} Slice of parameter space when $k=2\sqrt{2}$.}
\end{figure}

\begin{figure}
\centering
\psfrag{m}{$m$} \psfrag{n}{$n$}
\psfrag{v5}{$v_{5}$}   \psfrag{3s1}{$3s_{1}$}   \psfrag{v6}{$v_{6}$}
\psfrag{5s4}{$5s_{4}$} \psfrag{5s6}{$5s_{6}$}   \psfrag{v14}{$v_{14}$}
\psfrag{v7}{$v_{7}$}   \psfrag{3s2}{$3s_{2}$}   \psfrag{6s2}{$6s_{2}$}
\psfrag{6s1}{$6s_{1}$}
\centerline{\psfig{figure=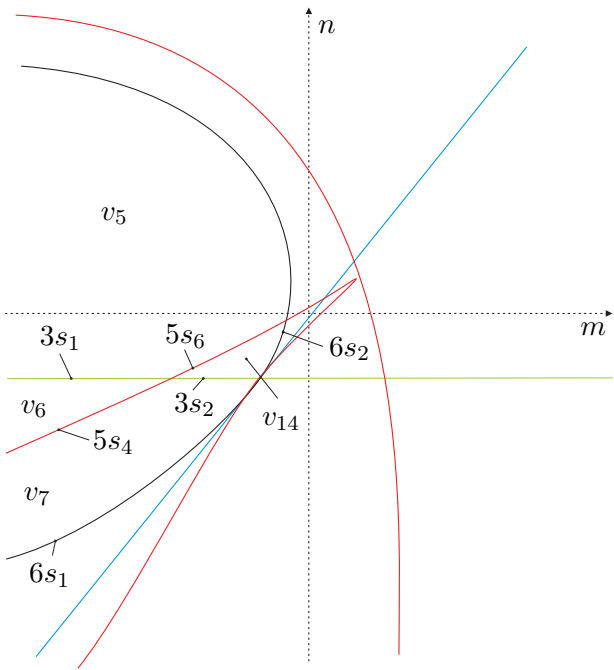,width=8cm}} \centerline {}
\caption{\small \label{slicek3_no_purple} Slice of parameter space when $k=3$.}
\end{figure}

All other regions of the parameter space related to singular points remain topologically the same with respect to the algebraic bifurcations of singularities when moving from Figs. \ref{slicek1} to \ref{slicek3_no_purple}.

We recall that the black curve (${\cal S}_{6}$) (or $W_{4}$) means the turning of a finite antisaddle different from the origin from a node to a focus. Then, according to the general results about quadratic systems, we could have limit cycles around such focus for any set of parameters having $W_{4} < 0$.

\begin{remark} \label{rem:f-n} \rm
Wherever two parts of equal dimension $d$ are separated only by a part of dimension $d-1$ of the black bifurcation surface (${\cal S}_{6}$), their respective phase portraits are topologically equivalent since the only difference between them is that a finite antisaddle has turned into a focus without change of stability and without appearance of limit cycles. We denote such parts with different labels, but we do not give specific phase portraits in pictures attached to Theorem \ref{th:1.1} for the parts with the focus. We only give portraits for the parts with nodes, except in the case of existence of a limit cycle or a graphic where the singular point inside them is portrayed as a focus. Neither do we give specific invariant description in Sec. \ref{sec:8} distinguishing between these nodes and foci.
\end{remark}

\subsection{Bifurcation surfaces due to connections}

We now place for each set of the partition on $k=3$ the local behavior of the flow around all the singular points. For a specific value of parameters of each one of the sets in this partition we compute the global phase portrait with the numerical program P4 \cite{Dumortier-Llibre-Artes:2006}. In fact, many (but not all) of the phase portraits in this work can be obtained not only numerically but also by means of perturbations of the systems of codimension one.

In this slice we have a partition in 2--dimensional regions bordered by curved polygons, some of them bounded, others bordered by infinity. From now on, we use lower letters provisionally to describe the sets found algebraically so not to interfere with the final partition described with capital letters. For each 2--dimensional region we obtain a phase portrait which is coherent with those of all their borders. Except one region. Consider the segment $3s_{1}$ in Fig. \ref{slicek3_no_purple}. On it we have a weak focus and a Hopf bifurcation. This means that either in $v_{5}$ or $v_{6}$ we must have a limit cycle. In fact it is in $v_{6}$. The same happens in $3s_{2}$, so a limit cycle must exist either in $v_{14}$ or $v_{7}$. However, when approaching $6s_{1}$ or $6s_{2}$, this limit cycle must have disappeared. So, either $v_{7}$ or $v_{14}$ must be splitted in two regions separated by a new surface (${\cal S}_{7}$) having at least one element $7S_{1}$ such that one region has limit cycle and the other does not, and the border $7S_{1}$ must correspond to a connection between separatrices. Numerically it can be checked that it is the region $v_{7}$ the one which splits in $V_{7}$ without limit cycles and $V_{15}$ with one limit cycle. It can also be analytically proved (see Proposition \ref{prop1}) that the segment $5s_{4}$ must be splitted in two segments $5S_{4}$ and $5S_{5}$ by the 1--dimensional subset $5.7L_{1}$. The other border of $7S_{1}$ must be $1.3L_{1}$ for coherence. We plot the complete bifurcation diagram in Fig. \ref{slice3_labels}. We also show the sequence of phase portraits along these subsets in Fig. \ref{sequence}.

Notice that the limit cycle which is ``born'' by Hopf on $3S_{1}$ either ``dies'' on $5S_{4}$ or ``survives'' when crossing $\eta = 0$, if we do it through $5S_{5}$, and then it ``dies'' either on $7S_{1}$ or again by Hopf in $3S_{2}$.

\begin{figure}
\centering
\psfrag{V5}{$V_{5}$}   \psfrag{3S1}{$3S_{1}$}     \psfrag{V6}{$V_{6}$}
\psfrag{5S4}{$5S_{4}$} \psfrag{5.7L1}{$5.7L_{1}$} \psfrag{5S5}{$5S_{5}$}
\psfrag{V15}{$V_{15}$} \psfrag{7S1}{$7S_{1}$}     \psfrag{V7}{$V_{7}$}
\psfrag{3S2}{$3S_{2}$} \psfrag{V14}{$V_{14}$}
\centerline{\psfig{figure=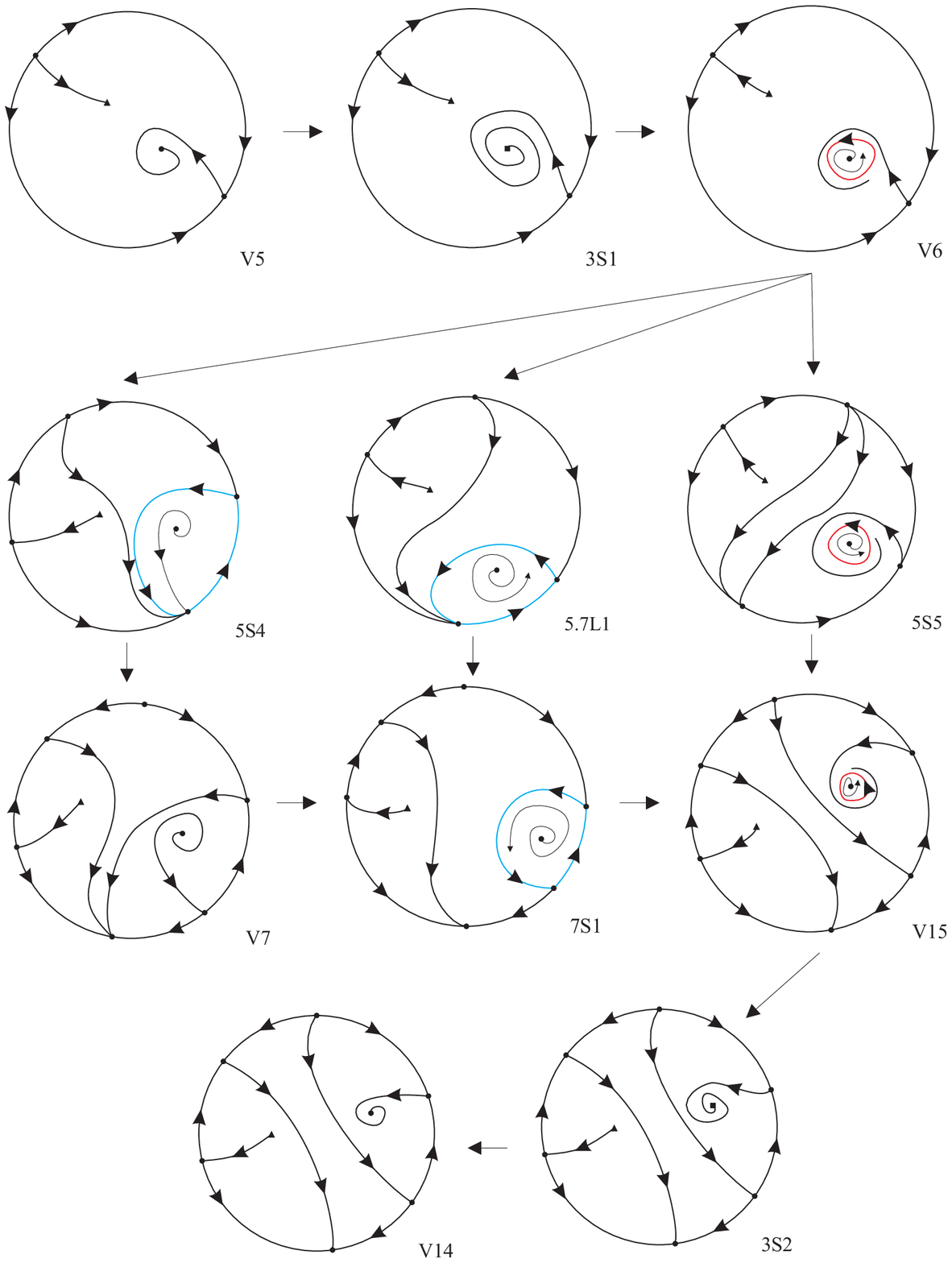,width=8cm}} \centerline {}
\caption{\small \label{sequence} Sequence of phase portraits in slice $k=3$. We start from $v_{5}$. We recall that the phase portrait $3S_{1}$ is equivalent to the phase portrait $V_{5}$ up to a weak focus (represented by a little black square) in place of the focus. When crossing $3s_{1}$, we shall obtain the phase portrait $V_{6}$ in subset $v_{6}$. From this point we may choose three different ways to reach the subset $v_{7}$ by crossing $5s_{4}$: (1) from the phase portrait $5S_{4}$ to the $V_{7}$; (2) from the phase portrait $5.7L_{1}$ to the $7S_{1}$; and (3) from the phase portrait $5S_{5}$ to the $V_{15}$, from where we can move to $V_{14}$.}
\end{figure}

Surface (${\cal S}_{7}$), for a concrete $k > 2\sqrt{2}$, is a curve which starts on $1.3L_{1}$ and may either cut $5s_{4}$, or not. We are going to prove that, at least for a concrete $k_0$, (${\cal S}_{7}$) must cut it, and consequently it must do the same for an open interval around $k_0$, thus proving the existence of subsets $5S_{4}$, $5S_{5}$ and $5.7L_{1}$ which have different phase portraits.


\begin{proposition} \label{prop1}
The following statements hold:
\begin{enumerate}[(a)]
    \item System \eqref{eqtn} with $(m,n,k) = (-29/2,-105/4,7)$ has an even number of limit cycles (counting their multiplicities), and possibly this number is zero;
    \item  System \eqref{eqtn} with $(m,n,k) = (-49/2,-185/4,12)$ has an odd number of limit cycles (counting their multiplicities), and possibly this number is one;
\end{enumerate}
\end{proposition}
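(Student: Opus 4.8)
The plan is to establish only the \emph{parity} of the number of limit cycles, which is exactly what the statement asserts, by analysing the Poincar\'e return (displacement) function on a transversal joining the relevant focus to the outer boundary of its annular neighbourhood. Both systems have the triple node $\bar n_{(3)}$ at the origin, which by Proposition \ref{th2.19} (here $\ell=-1<0$ and the relevant coefficient $a=2>0$) is an unstable node, together with a second finite singular point $F=(-2k/\mu,\,4/\mu)$, where $\mu=k^2+4km-4n$. First I would check that $F$ is a focus by verifying $W_4<0$ at each parameter point, and recall from item (iii) of Sec.\ \ref{sec:basicwf} that a limit cycle of a quadratic system must surround a unique focus; hence all limit cycles are nested around $F$, and their number (with multiplicity) equals the number of zeros (with multiplicity) of the displacement function $d$ on a ray issuing from $F$. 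The parity of that number is determined by whether $d$ has equal or opposite signs at its two ends.

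For the inner end I would compute the trace of the Jacobian of \eqref{eqtn} at $F$, namely $\tau=1+(8-4mk+8n)/\mu$. A direct evaluation gives $\mu<0$ and $\tau>0$ at both points, so in each case $F$ is an \emph{unstable} focus; this fixes the sign of $d$ near $F$ (trajectories spiral outward). Thus the inner end contributes the same sign in (a) and (b), and the whole parity is governed by the outer boundary.

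For the outer end I would first verify that both points lie on $\mathcal{S}_5$, i.e.\ $\eta=0$, so two infinite singular points have collided: solving $C(X,Y,0)=0$ exhibits a double root in the direction $[-1:2:0]$ (a saddle--node at infinity) together with a simple infinite singular point. A direct computation in the chart $(u,z)$ of \eqref{26} shows that the Jacobian at this point has eigenvalues $0$ and a positive one, with centre manifold along the invariant line $z=0$; Proposition \ref{th2.19} then yields its local saddle--node portrait and the direction of the separatrix entering the finite plane. The boundary of the annulus around $F$ is the graphic formed by this separatrix and an arc of the line at infinity, whose configuration is compatible with Theorem \ref{th:4.2} since it surrounds $F$. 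The crucial point is to decide whether this graphic is attracting or repelling from inside: an unstable focus together with an \emph{attracting} graphic forces $d$ to keep the same sign at both ends, giving an even number of limit cycles (possibly zero), which is case (a); an unstable focus together with a \emph{repelling} graphic makes $d$ change sign, giving an odd number (possibly one), which is case (b).

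The main obstacle is precisely this last step: establishing the attracting/repelling character of the graphic through the \emph{non}-hyperbolic saddle--node at infinity. Because its centre manifold lies along the invariant line at infinity, the usual criterion (the product of hyperbolicity ratios at the corners) does not apply, and one must instead control the return map near the saddle--node by a local normal form, or a Dulac-type estimate, and match it with the global passage of the separatrix through the finite plane. This stability quantity is the one that changes sign as $(m,n,k)$ moves along $5s_4$, the change occurring exactly at $5.7L_1$ where the graphic becomes a genuine connection of separatrices; this is also why the surface $\mathcal{S}_7$ is not algebraic and must be located numerically. Since only the signs of $d$ at the two ends are controlled, the argument yields the parity of the number of limit cycles but cannot exclude additional pairs, which is exactly why the statement claims ``even, possibly zero'' in (a) and ``odd, possibly one'' in (b).
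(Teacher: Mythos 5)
Your framework (parity of the zeros of the displacement function, fixed by comparing the sign of $d$ at the focus and at the outer boundary of the annulus) is the right one, and your inner-end analysis is sound: the second finite point $F=(-2k/\mu,4/\mu)$ is indeed an unstable focus at both parameter values, so everything hinges on the outer end. But that is exactly where your argument stops short of a proof. You choose as outer boundary the graphic through the saddle--node at infinity and then need its stability from inside; you yourself identify this as ``the main obstacle'' and only sketch how one \emph{might} attack it (normal form at the non-elementary infinite point, Dulac-type estimates, matching with the global passage of the separatrix). None of that is carried out, and it cannot be waved through: the inner stability of that graphic is precisely the quantity that changes sign along $5s_4$ at $5.7L_1$, i.e.\ it is the non-algebraic information that defines $\mathcal{S}_7$. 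An argument that does not actually compute or bound it proves nothing about either parameter point.

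The paper avoids this difficulty entirely by replacing the graphic with an explicit algebraic curve without contact. For each system it constructs a hyperbola $\mathcal{H}=ax^2+bxy+y^2+dx+ey+f=0$ passing through the two infinite singular points with the tangencies of the affine separatrices, with one remaining free parameter used to place it; one branch isolates the focus in $\{\mathcal{H}<0,\ x>0\}$. The direction in which the flow crosses this hyperbola is then a \emph{finite, checkable} computation: the scalar product of the vector field with the normal to $\mathcal{H}$, restricted to $\mathcal{H}=0$, is a polynomial whose sign can be verified not to change. In case (a) the flow exits the region, so together with the repelling focus the displacement function has the same sign at both ends (even number of cycles); in case (b) the flow enters, forcing a sign change (odd number). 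If you want to complete your proof along your own lines, the cleanest fix is the same idea: abandon the graphic as outer boundary and exhibit a concrete transversal oval or arc (the explicit hyperbolas of the paper, or any curve you can verify is without contact) on which the sign of $d$ is determined by direct computation rather than by a delicate stability analysis at a degenerate singularity at infinity.
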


\begin{proof}
\emph{(a)} We see that the system with rational coefficients
    \begin{equation}
        \begin{array}{ccl}
            \dot{x} & = & 2 xy + 7 y^2, \\
            \dot{y} & = & y - x^2 - 29 xy - \frac{105}{4} y^2 \\
        \end{array}
        \label{eq5S4}
    \end{equation}
is a representative of the red surface (${\cal S}_{5}$) which belongs to the subset $5s_{4}$.

We have to show that there exists a hyperbola $$\mathcal{H} \equiv ax^2 + bxy + y^2 + dx + ey + f = 0$$ which isolates the focus of \eqref{eq5S4} on the region where $\mathcal{H}<0$ and $x>0$, and with the property that at each of its points the flow crosses the hyperbola in only one direction, as we can see in Fig. \ref{hyp}. By proving the existence of this hyperbola, we shall prove that \eqref{eq5S4} has an even number of limit cycles.

\begin{figure}
\centering
\psfrag{5S4}{$5S_{4}$}
\centerline{\psfig{figure=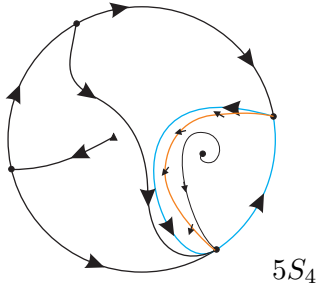,width=4cm}} \centerline {}
\caption{\small \label{hyp} The hyperbola in the phase portrait of $5S_{4}$.}
\end{figure}

For convenience and making easier the calculations, we impose that the hyperbola passes through two infinite singular points of \eqref{eq5S4} with the same tangencies of the affine separatrices. With all these features we have just one free parameter which is used to force the hyperbola to pass through a concrete finite point. In resume, we take $a = 1/14$, $b = 57/28$, $$d = \frac{1}{392} \left(\sqrt{148225 e^2+1}+399 e-1\right),$$ $$f = \frac{e^2 - 28d e + e \sqrt{(28d - e)^2}}{1516 \sqrt{(28d - e)^2} - 110}$$ and $e = -324/10000$.

This hyperbola has a component fully included in the fourth quadrant and it is easy to check that the scalar product of its tangent vector with the flow of the vector field does not change its sign and the flow moves outwards the region $\mathcal{H} < 0$. Since the focus is repellor, this is consistent with the absence of limit cycles (or with an even number of them, counting their multiplicities).

\emph{(b)} We see that the system with rational coefficients
    \begin{equation}
        \begin{array}{ccl}
            \dot{x} & = & 2 xy + 12 y^2, \\
            \dot{y} & = & y - x^2 - 49 xy - \frac{185}{4} y^2 \\
        \end{array}
        \label{eq5S5}
    \end{equation}
is a representative of the red surface (${\cal S}_{5}$) which belongs to the subset $5s_{4}$.

Analogously, we have to show that there exists a hyperbola $$\mathcal{H} \equiv ax^2 + bxy + y^2 + dx + ey + f = 0$$ which isolates the focus of \eqref{eq5S5} on the region where $\mathcal{H}<0$ and $x>0$, and with the property that at each of its points the flow crosses the hyperbola in only one direction, as we can see in Fig. \ref{hyp2}. By proving the existence of this hyperbola, we shall prove that \eqref{eq5S5} has an odd number of limit cycles.

\begin{figure}
\centering
\psfrag{5S5}{$5S_{5}$}
\centerline{\psfig{figure=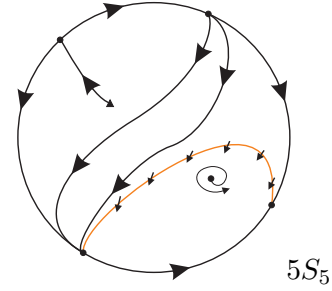,width=4cm}} \centerline {}
\caption{\small \label{hyp2} The hyperbola in the phase portrait of $5S_{5}$.}
\end{figure}

By using the same technique as before, we take $a = 1/24$, $b = 97/48$, $$d = \frac{1}{1152} \left(\sqrt{1299600 e^2 + 1} + 1164 e - 1\right),$$ $$f = \frac{e^2 - 48d e + e \sqrt{(48d - e)^2}}{4516 \sqrt{(48d - e)^2} - 190}$$ and $e = -18663/100000$.

This hyperbola has a component fully included in the fourth quadrant and it is easy to check that the scalar product of its tangent vector with the flow of the vector field does not change its sign and the flow moves inwards the region $\mathcal{H} < 0$. Since the focus is repellor, this is consistent with the presence of one limit cycle (or with an odd number of them, counting their multiplicities).
\end{proof}

We cannot be sure that this is all the additional bifurcation curves in this slice. There could exist others which are closed curves which are small enough to escape our numerical research. For all other two--dimensional parts of the partition of this slice whenever we join two points which are close to two different borders of the part, the two phase portraits are topologically equivalent. So we do not encounter more situations than the one mentioned above.

As we vary $k$ in $(2\sqrt{2}, \infty)$, the numerical research shows us the existence of the phenomenon just described, but for the values of $k$ in $[0,2\sqrt{2})$, we have not found the same behavior.

In Figs. \ref{slice0_labels} to \ref{slice3_labels} we show the complete bifurcation diagrams. In Sec. \ref{sec:8} the reader can look for the topological equivalences among the phase portraits appearing in the various parts and the selected notation for their representatives in Fig. \ref{fig:phase1}.

\begin{figure}
\centering
\psfrag{m}{$m$} \psfrag{n}{$n$}
\centerline{\psfig{figure=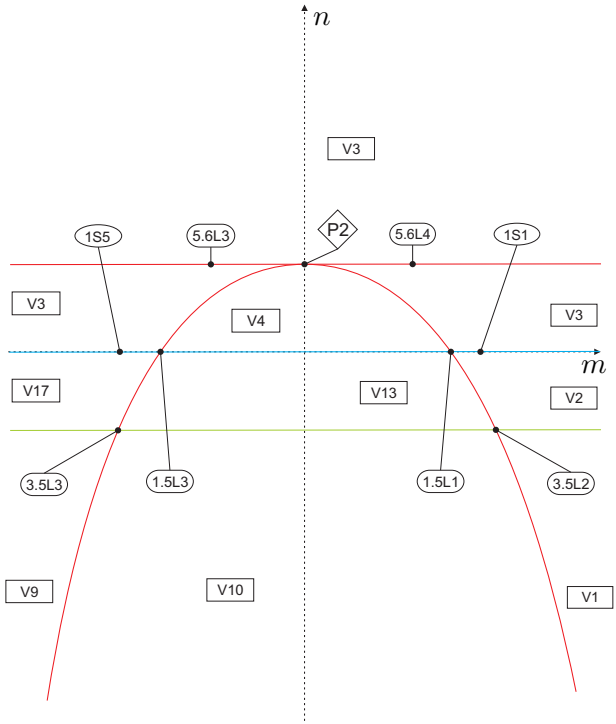,width=8cm}} \centerline {}
\caption{\small \label{slice0_labels} Complete bifurcation diagram for slice $k=0$.}
\end{figure}

\begin{figure}
\centering
\psfrag{m}{$m$} \psfrag{n}{$n$}
\centerline{\psfig{figure=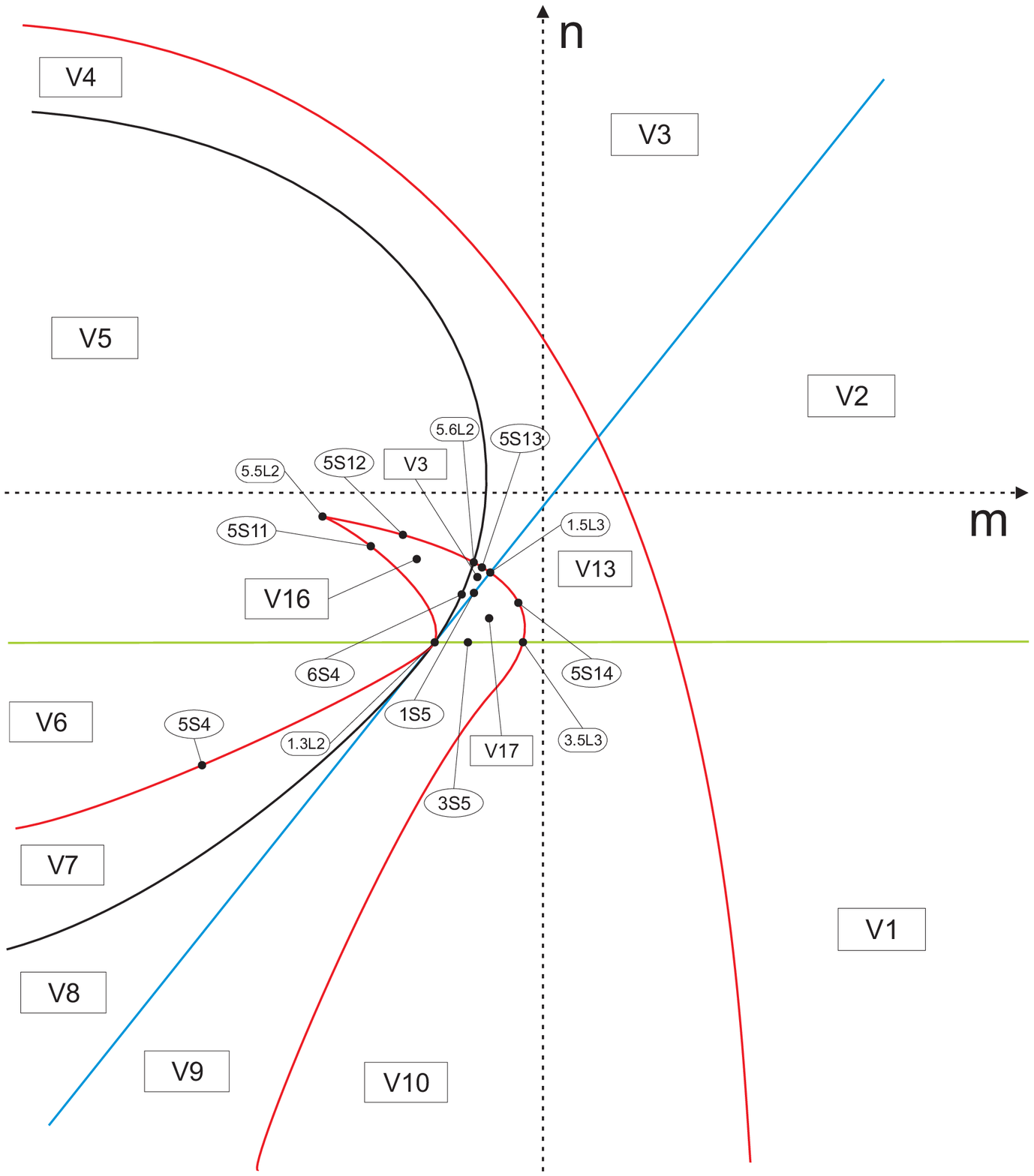,width=8cm}} \centerline {}
\caption{\small \label{slice1_labels} Complete bifurcation diagram for slice $k=1$.}
\end{figure}

\begin{figure}
\centering
\psfrag{m}{$m$} \psfrag{n}{$n$}
\centerline{\psfig{figure=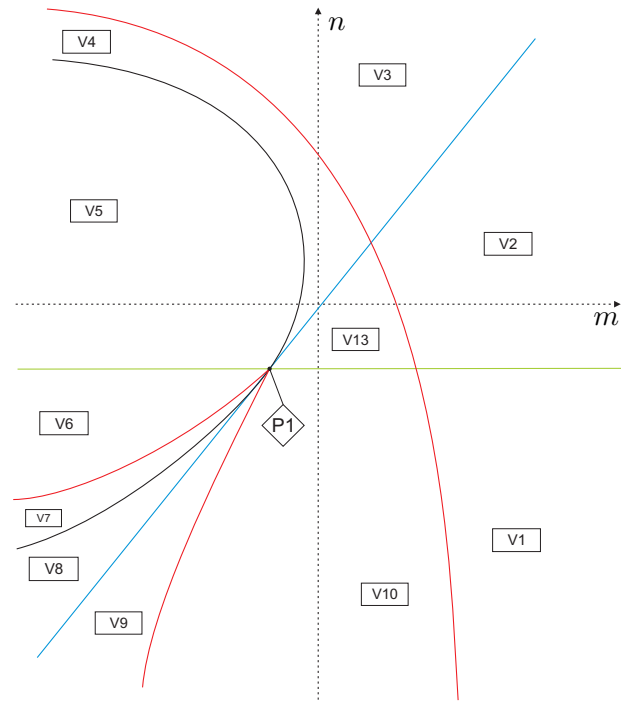,width=8cm}} \centerline {}
\caption{\small \label{slice2sqrt2_labels} Complete bifurcation diagram for slice $k=2\sqrt{2}$.}
\end{figure}

\begin{figure}
\centering
\psfrag{m}{$m$} \psfrag{n}{$n$}
\centerline{\psfig{figure=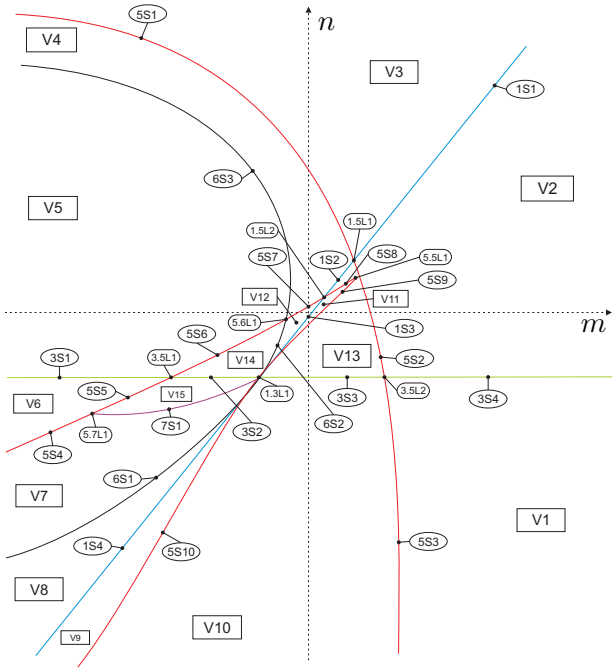,width=8cm}} \centerline {}
\caption{\small \label{slice3_labels} Complete bifurcation diagram for slice $k=3$.}
\end{figure}

\subsection{Other relevant facts about the bifurcation diagram} \label{sec:islands}

The bifurcation diagram we have obtained for $QT\overline{N}$ is completely coherent. By this, we mean that if we take any two points in the parameter space and join them by a continuous curve, along this curve the changes in phase portraits that occur when crossing the different bifurcation surfaces we mention can be completely explained.

However, we cannot be sure that this bifurcation diagram is the complete bifurcation diagram for $QT\overline{N}$ due to the possibility of ``islands'' inside the parts bordered by unmentioned bifurcation surfaces. In case they exist, these ``islands'' would not mean any modification of the nature of the singular points. So, on the border of these ``islands'' we could only have bifurcations due to saddle connections or multiple limit cycles.

In case there were more bifurcation surfaces, we should still be able to join two representatives of any two parts of the 63 parts found until now with a continuous curve either without crossing such bifurcation surface or, in case the curve crosses it, it must do it an even number of times without tangencies, otherwise one must take into account the multiplicity of the tangency, so the total number must be even. This is why we call these potential bifurcation surfaces ``\textit{islands}''.

To give an example of such a potential ``island'', we consider region $V_1$ where we have a phase portrait having a finite antisaddle, a saddle and two pairs of infinite antisaddles and one pair of infinite saddles. This phase portrait is topologically equivalent (modulo limit cycles and taking the triple node as a simple antisaddle) with the phase portrait 9.1 from \cite{Artes-Kooij-Llibre:1998} where all structurally stable quadratic vector fields were studied, (see the first phase portrait of Fig. \ref{fig:unstable}).

We note that in \cite{Artes-Kooij-Llibre:1998} it is proved that structurally stable (modulo limit cycles) quadratic vector fields can have exactly 44 different phase portraits. In the case of system \eqref{eqtn}, we have a semi--elemental triple node which topologically behaves like an elemental node, and the phase portraits in generic regions on bifurcation diagram will look like structurally stable ones. From those 44, two have no singular points, one has no finite antisaddles and 33 have four finite singular points, so obviously they cannot appear in $QT\overline{N}$. From the remaining 8, only 7 appear in our description of $QT\overline{N}$. There are two potential reasons for the absence of the remaining case: (1) it cannot be realized within $QT\overline{N}$, or (2) it may live in such ``islands'' where the conditions for the singular points are met, but the separatrix configuration is not the one that we have detected as needed for the coherence.

For example, the structurally stable phase portrait 9.2 has so far not appeared anywhere, but it could perfectly fit in an ``island'' inside $V_1$ (or $V_{11}$) where we have phase portrait 9.1. The transition from 9.1 to 9.2 consists on the existence of a heteroclinic connection between the finite saddle and one of the infinite saddles as it can be seen in Fig. \ref{fig:unstable}. We also show (in the middle of this figure) the unstable phase portrait from which could bifurcate and also has the potential to be on the bifurcation surface delimiting the ``island''.

\begin{figure}
\centering
\psfrag{9.1}{$9.1$} \psfrag{9.2}{$9.2$}
\centerline{\psfig{figure=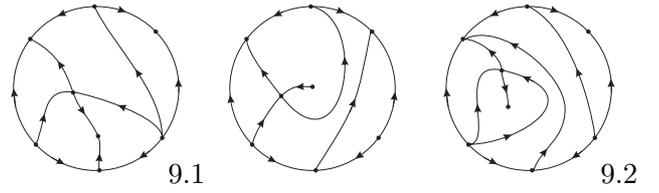,width=8cm}} \centerline {}
\caption{\small \label{fig:unstable} Example of a potential ``island''.}
\end{figure}

\section{Completion of the proof of the main theorem} \label{sec:8}

\indent In the bifurcation diagram we may have topologically equivalent phase portraits belonging to distinct parts of the parameter space. As here we have 63 distinct parts of the parameter space, to help us identify or to distinguish phase portraits, we need to introduce some invariants and we actually choose integer--valued invariants. All of them were already used in \cite{Llibre-Schlomiuk:2004,Artes-Llibre-Schlomiuk:2006}. These integer--valued invariants yield a classification which is easier to grasp.

\begin{definition} \rm We denote by $I_{1}(S)$ the number of the real finite singular points. This invariant is also denoted by $\N_{\R,f}(S)$ \cite{Artes-Llibre-Schlomiuk:2006}.
\end{definition}

\begin{definition}\rm We denote by $I_{2}(S)$ the sum of the indices of the real finite singular points. This invariant is also denoted by $\deg (DI_f(S))$ \cite{Artes-Llibre-Schlomiuk:2006}.
\end{definition}

\begin{definition}\rm We denote by $I_{3}(S)$ the number of the real infinite singular points. This invariant is also denoted by $\N_{\R,\infty}(S)$ \cite{Artes-Llibre-Schlomiuk:2006}.
\end{definition}

\begin{definition} \rm We denote by $I_{4}(S)$ the sequence of digits (each one ranging from 0 to 4) such that each digit describes the total number of global or local separatrices (different from the line of infinity) ending (or starting) at an infinite singular point. The number of digits in the sequences is 2, 4 or 6 according to the number of infinite singular points. We can start the sequence at anyone of the infinite singular points but all sequences must be listed in a same specific order either clockwise or counter--clockwise along the line of infinity.
\end{definition}

In our case we have used the clockwise sense beginning from the top--most infinite singular point in the pictures shown in Fig. \ref{fig:phase1}.

\begin{definition} \rm We denote by $I_{5}(S)$ a digit which gives the number of limit cycles.
\end{definition}

As we have noted previously in Remark \ref{rem:f-n}, we do not distinguish between phase portraits whose only difference is that in one we have a finite node and in the other a focus. Both phase portraits are topologically equivalent and they can only be distinguished within the $C^1$ class. In case we may want to distinguish between them, a new invariant may easily be introduced.

\begin{theorem} \label{t12}
Consider the family $QT\overline{N}$ of all quadratic systems with a semi--elemental triple node. Consider now all the phase portraits that we have obtained for this family. The values of the affine invariant ${\cal I}=(I_{1},I_{2},I_{3},I_{4},I_{5})$ given in the following diagram yield a partition of these phase portraits of the family $QT\overline{N}$.

Furthermore, for each value of $\cal I$ in this diagram there corresponds a single phase portrait; i.e. $S$ and $S'$ are such that $I(S)=I(S')$, if and only if $S$ and $S'$ are topologically equivalent.
\end{theorem}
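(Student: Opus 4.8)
The plan is to prove the two halves of the biconditional separately: the \emph{necessity} that topologically equivalent portraits share the value of $\mathcal{I}$, and the \emph{sufficiency} that equal values of $\mathcal{I}$ force topological equivalence. Since the family $QT\overline{N}$ has already been reduced to the finite list of 28 representatives displayed in Fig.~\ref{fig:phase1}, the whole statement reduces to a well-definedness check together with a finite injectivity check on a table of invariant values. First I would set up the diagram itself by computing the tuple $\mathcal{I}=(I_{1},I_{2},I_{3},I_{4},I_{5})$ for each of the 28 portraits, reading the data directly off the configurations obtained in the bifurcation analysis.

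For the necessity direction I would argue that each of the five integer--valued invariants is preserved under topological equivalence. The count $I_{1}$ of real finite singular points and the count $I_{3}$ of real infinite singular points are topological, and the index sum $I_{2}$ is topological by the standard index theory invoked in the proof of Theorem~\ref{th:4.2} (indices are determined by the local sectorial decomposition, which is respected by a homeomorphism carrying orbits to orbits). The separatrix sequence $I_{4}$ is topological once a fixed reading convention is adopted, namely the clockwise order starting from the top--most infinite singular point already specified after its definition; a topological equivalence permutes the infinite singular points cyclically and may reverse orientation, so $I_{4}$ is well defined as a sequence up to the allowed rotations and reflection, exactly as declared. Finally $I_{5}$, the number of limit cycles, is a topological invariant. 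Hence $S$ topologically equivalent to $S'$ implies $\mathcal{I}(S)=\mathcal{I}(S')$.

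For the sufficiency direction I would exhibit the completed table and verify that the 28 tuples are pairwise distinct. Most portraits are already separated by the coarse triple $(I_{1},I_{2},I_{3})$, obtained from the finite singularity data of system~\eqref{eqtn} (recalling that the triple node at the origin contributes as a single antisaddle, by Proposition~\ref{th2.19}) together with the infinite singular points read off from \eqref{26} and \eqref{27}. Whenever two portraits agree on this triple, I would appeal to the finer invariants: $I_{4}$ distinguishes those that differ in the way separatrices land at infinity, and $I_{5}$ separates the three portraits carrying a limit cycle, located in $V_{6}$, $V_{15}$ and $5S_{5}$ by Theorem~\ref{th:1.1}(a), from their limit--cycle--free counterparts. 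Confirming that these refinements resolve every remaining collision completes the injectivity check, and together with the necessity direction yields the stated equivalence.

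The main obstacle is the bookkeeping around $I_{4}$. The genuine difficulty is twofold: first, the separatrix sequence must be read with a rigidly fixed convention so that it is truly well defined modulo only the cyclic rotations and the single reflection permitted by topological equivalence, since an inconsistent choice of starting point or orientation could spuriously distinguish equivalent portraits or falsely merge inequivalent ones; second, for the handful of pairs that coincide on $(I_{1},I_{2},I_{3},I_{5})$ one must verify by direct inspection of Fig.~\ref{fig:phase1} that the local and global separatrices counted at each infinite singular point genuinely differ. Once these separatrix sequences are tabulated carefully and shown to discriminate the otherwise--coincident cases, the partition induced by $\mathcal{I}$ matches the topological classification and the theorem follows.
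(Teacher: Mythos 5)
Your proposal is correct and follows essentially the same route as the paper: the paper's proof consists precisely of tabulating the values of $(I_{1},I_{2},I_{3},I_{4},I_{5})$ for the 28 representative portraits (the decision tree of Table 1), observing that each invariant is preserved by topological equivalence, and checking that the 28 resulting tuples are pairwise distinct. Your more explicit split into the necessity and sufficiency directions, and your caution about the reading convention for $I_{4}$, simply make explicit what the paper leaves to the reader's ``careful analysis'' of the diagrams and tables.
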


The bifurcation diagram for $QT\overline{N}$ has 63 parts which produce 28 topologically different phase portraits as described in Table 1. The remaining 35 parts do not produce any new phase portrait which was not included in the 28 previous. The difference is basically the presence of a strong focus instead of a node and vice versa.

The phase portraits having neither limit cycle nor graphic have been denoted surrounded by parenthesis, for example $(V_{1})$; the phase portraits having one limit cycle have been denoted surrounded by brackets, for example $[V_{6}]$; the phase portraits having one graphic have been denoted surrounded by $\{\}$, for example $\{5S_{4}\}$.

\begin{proof}
The above result follows from the results in the previous sections and a careful analysis of the bifurcation diagrams given in Sec. \ref{sec:bifur} in Figs. \ref{slice0_labels} to \ref{slice3_labels}, the definition of the invariants $I_{j}$ and their explicit values for the corresponding phase portraits.
\end{proof}

We first make some observations regarding the equivalence relations used in this work: the affine and time rescaling, $C^1$ and topological equivalences.

The coarsest one among these three is the topological equivalence and the finest is the affine equivalence. We can have two systems which are topologically equivalent but not $C^1$--equivalent. For example, we could have a system with a finite antisaddle which is a structurally stable node and in another system with a focus, the two systems being topologically equivalent but belonging to distinct $C^1$--equivalence classes, separated by a surface (${\cal S}_{6}$ in this case) on which the node turns into a focus.

In Table 2 we listed in the first column 28 parts with all the distinct phase portraits of Fig. \ref{fig:phase1}. Corresponding to each part listed in column 1 we have in its horizontal block, all parts whose phase portraits are topologically equivalent to the phase portrait appearing in column 1 of the same horizontal block.

In the second column we have put all the parts whose systems yield topologically equivalent phase portraits to those in the first column but which may have some algebro--geometric features related to the position of the orbits.

In the third (respectively, fourth, and fifth) column we list all parts whose phase portraits have another antisaddle which is a focus (respectively, a node which is at a bifurcation point producing foci close to the node in perturbations, a node--focus to shorten, and a finite weak singular point).

Whenever phase portraits appear on a horizontal block in a specific column, the listing is done according to the decreasing dimension of the parts where they appear, always placing the lower dimensions on lower lines.

\onecolumn
\begin{center}
{\sc Table 1:} Geometric classification.
{
\[
I_{1}= \left\{ \ba{ll}
\mbox{$2$ \& } I_{2} = \left\{ \ba{l}
\mbox{$2$ \& $I_{3}=$} \left\{ \ba{l}

\mbox{$3$ \& $I_{4}=$} \left\{ \ba{l}
\mbox{$110110$} \,\, (V_{3}), \\
\mbox{$112110$} \,\, (V_{7}), \\
\mbox{$111111$ \& $I_{5}=$} \left\{ \ba{l}
\mbox{$1$} \,\, [V_{15}], \\
\mbox{$0$} \,\, (V_{12}), \ea \right. \\
\mbox{$110111$} \,\, \{7S_{1}\}, \ea \right. \\

\mbox{$2$ \& $I_{4}=$} \left\{ \ba{l}
\mbox{$1212$ \& $I_{5}=$} \left\{ \ba{l}
\mbox{$1$} \,\, [5S_{5}], \\
\mbox{$0$} \,\, (5S_{6}), \ea \right. \\
\mbox{$1111$} \,\, (5S_{1}), \\
\mbox{$1131$} \,\, \{5S_{4}\}, \\
\mbox{$1122$} \,\, (5S_{10}), \\
\mbox{$1121$} \,\, \{5.7L_{1}\}, \\ \ea \right. \\

\mbox{$1$ \& $I_{4}=$} \left\{ \ba{l}
\mbox{$11$ \& $I_{5}=$} \left\{ \ba{l}
\mbox{$1$} \,\, [V_{6}], \\
\mbox{$0$} \,\, (V_{4}), \ea \right. \\
\ea \right. \\
\ea \right.\\

\mbox{$0$ \& $I_{3}=$} \left\{ \ba{l}

\mbox{$3$ \& $I_{4}=$} \left\{ \ba{l}
\mbox{$111201$} \,\, (V_{1}), \\
\mbox{$110211$} \,\, (V_{9}), \\
\mbox{$101311$} \,\, (V_{11}), \\ \ea \right. \\

\mbox{$2$ \& $I_{4}=$} \left\{ \ba{l}
\mbox{$1122$} \,\, (5S_{2}), \\
\mbox{$2041$} \,\, (5S_{8}), \\
\mbox{$1132$} \,\, (5S_{9}), \\ \ea \right. \\

\mbox{$1$} \,\, (V_{10}), 

\ea \right.\\
\ea \right. \\

\mbox{$1$ \& $I_{2}=$} \left\{ \ba{l}
\mbox{$1$ \& $I_{3}=$} \left\{ \ba{l}

\mbox{$3$ \& $I_{4}=$} \left\{ \ba{l}
\mbox{$110110$} \,\, (1S_{1}), \\
\mbox{$102110$} \,\, (1S_{4}), \\
\mbox{$101111$} \,\, (1S_{3}), \\ \ea \right. \\

\mbox{$2$ \& $I_{4}=$} \left\{ \ba{l}
\mbox{$1200$} \,\, (1.3L_{1}), \\
\mbox{$1011$} \,\, \{1.3L_{2}\}, \\
\mbox{$1111$} \,\, (1.5L_{1}), \\
\mbox{$1202$} \,\, (1.5L_{2}), \\ \ea \right.\\

\mbox{$1$ \& $I_{4}=$} \left\{ \ba{l}
\mbox{$10$} \,\, (1S_{2}), \\
\mbox{$21$} \,\, (P_{1}). \\ \ea \right.\\

\ea \right.
    \ea \right.
        \ea \right.
\]
}
\end{center}
\twocolumn

\onecolumn
\begin{center}
{\sc Table 2:} Topological equivalences. \linebreak
\begin{tabular}{ccccc}
\hline
Presented & Identical       & Finite      & Finite       & Finite \\
phase     & under           & antisaddle  & antisaddle   & weak   \\
portrait  & perturbations   & focus       & node--focus  & point \\
\hline
\multirow{2}{*}{$V_{1}$} & $V_{2}$, $V_{9}$, $V_{17}$ & & & \\
        & & & & $3S_{4}$, $3S_{5}$ \\
\hline
\multirow{2}{*}{$V_{3}$} & & $V_{16}$ & & \\
        & & & $6S_{4}$ & \\
\hline
\multirow{3}{*}{$V_{4}$} & & $V_{5}$ & & \\
        & & & $6S_{3}$ & $3S_{1}$ \\
        & & $5.5L_{2}$ & & \\
        & $P_{2}$ & & & \\
\hline
$V_{6}$ & & & & \\
\hline
\multirow{2}{*}{$V_{8}$} & & $V_{7}$ & & \\
        & & & $6S_{1}$ & \\
\hline
\multirow{3}{*}{$V_{10}$} & $V_{13}$ & & & \\
         & & & & $3S_{3}$ \\
         & $5.5L_{1}$ & & & \\
\hline
$V_{11}$ & & & & \\
\hline
\multirow{2}{*}{$V_{12}$} & & $V_{14}$ & & \\
         & & & $6S_{2}$ & $3S_{2}$ \\
\hline
$V_{15}$ & & & & \\
\hline
$1S_{1}$ & $1S_{5}$ & & & \\
\hline
$1S_{2}$ & & & & \\
\hline
$1S_{3}$ & & & & \\
\hline
$1S_{4}$ & & & & \\
\hline
\multirow{2}{*}{$5S_{1}$} & $5S_{13}$ & $5S_{11}$, $5S_{12}$ & & \\
         & & & $5.6L_{2}$, $5.6L_{3}$, $5.6L_{4}$ & \\
\hline
\multirow{2}{*}{$5S_{2}$} & $5S_{3}$, $5S_{14}$ & & & \\
         & & & & $3.5L_{2}$, $3.5L_{3}$ \\
\hline
$5S_{4}$ & & & & \\
\hline
$5S_{5}$ & & & & \\
\hline
\multirow{2}{*}{$5S_{7}$} & & $5S_{6}$ & & \\
         & & & $5.6L_{1}$ & $3.5L_{1}$ \\
\hline
$5S_{8}$ & & & & \\
\hline
$5S_{9}$ & & & & \\
\hline
$5S_{10}$ & & & & \\
\hline
$7S_{1}$ & & & & \\
\hline
$1.3L_{1}$ & & & & \\
\hline
$1.3L_{2}$ & & & & \\
\hline
$1.5L_{1}$ & $1.5L_{3}$ & & & \\
\hline
$1.5L_{2}$ & & & & \\
\hline
$5.7L_{1}$ & & & & \\
\hline
$P_{1}$ & & & & \\
\hline
\end{tabular}
\end{center}
\twocolumn

\subsection {Proof of the main theorem}

The bifurcation diagram described in Sec. \ref{sec:bifur}, plus Table 1 of the geometrical invariants distinguishing the 28 phase portraits, plus Table 2 giving the equivalences with the remaining phase portraits lead to the proof of the main statement of Theorem \ref{th:1.1}.

In \cite{Artes-Llibre:2013} the authors are studying all phase portraits of quadratic systems having exactly one saddle--node or one connection of separatrices. By using a similar technique as the one used in \cite{Artes-Kooij-Llibre:1998} for the structurally stable ones, they have produced a complete list of topologically possible structurally unstable systems of codimension one (modulo limit cycles), they have erased many of them proving their impossibility and they have proved the existence of many others (180 just before this paper), and it remains 24 which escaped up to now both the proof of their impossibility and finding an example.

Our system in $QT\overline{N}$ $V_{11}$ yields an example of their ``wanted'' case $A_{23}$ by perturbating the triple node while producing the desired phase portrait as may be seen in Fig. \ref{sequence_unstable}.

\begin{figure}
\centering
\psfrag{A22}{$A_{22}$}   \psfrag{XX}{$V_{11}$}     \psfrag{A23}{$A_{23}$}
\centerline{\psfig{figure=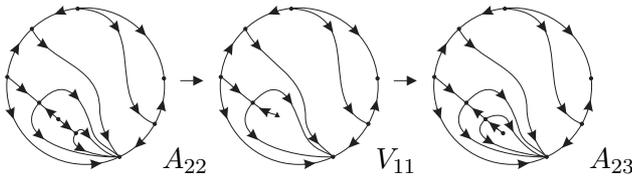,width=8cm}} \centerline {}
\caption{\small \label{sequence_unstable} The perturbations of phase portrait $V_{11}$ yielding the structurally unstable phase portraits $A_{22}$ and $A_{23}$.}
\end{figure}

\bigskip

\noindent\textbf {Acknowledgements.} The first author is partially supported by a MEC/FEDER grant number MTM 2008--03437 and by a CICYT grant number 2005SGR 00550, the second author is supported by CAPES/DGU grant number BEX 9439/12--9 and the last author is partially supported by CAPES/DGU grant number 222/2010 and by FP7--PEOPLE--2012--IRSES--316338.
%
%
%

\newcommand{\journal}[6]{#1 [#5] ``#2,'' \emph{#3} {\bf #4}, #6.}

\end{document}